
\documentclass[reqno]{amsart}%
\usepackage{amsfonts}
\usepackage{amsmath}
\usepackage{fullpage}
\usepackage{amssymb}
\usepackage{graphicx}%
\setcounter{MaxMatrixCols}{30}
\providecommand{\U}[1]{\protect\rule{.1in}{.1in}}
\newtheorem{theorem}{Theorem}
\theoremstyle{plain}

\newtheorem{corollary}[theorem]{Corollary}

\newtheorem{lemma}[theorem]{Lemma}

\numberwithin{theorem}{section}
\numberwithin{equation}{section}
\begin{document}
\title[Pontryagin-Krein Theorem]{Pontryagin-Krein Theorem: Lomonosov's proof and related results}
\author{Edward Kissin}
\address{E. Kissin: STORM, London Metropolitan University, 166-220 Holloway Road,
London N7 8DB, Great Britain;}
\email{e.kissin@londonmet.ac.uk}
\author{Victor S. Shulman}
\address{V. S. Shulman: Department of Mathematics, Vologda State University, Vologda,
Russia; }
\email{shulman.victor80@gmail.com}
\author{Yurii V. Turovskii}
\address{Yu. V. Turovskii}
\email{yuri.turovskii@gmail.com}
\thanks{2010 \textit{Mathematics Subject Classification.} Primary 47A15, Secondary 47L10}
\thanks{This paper is in final form and no version of it will be submitted for
publication elsewhere.}
\keywords{invariant subspace, Pontryagin space, Krein space, indefinite metric}
\dedicatory{To the memory of our dear friend and colleague Victor Lomonosov}
\begin{abstract}
We discuss Lomonosov's proof of the Pontryagin-Krein Theorem on
invariant maximal non-positive subspaces, prove the refinement of one theorem
from \cite{OShT} on common fixed points for a group of fractional-linear maps
of operator ball and deduce its consequences. Some Burnside-type counterparts
of the Pontryagin-Krein Theorem are also considered.

\end{abstract}
\maketitle

\section{Introduction and preliminaries}

In 1944 L. S. Pontryagin, stimulated by actual problems of mechanics,  published his famous paper \cite{Pontr} where it was proved that if an operator $T$ is selfadjoint with respect to a scalar
product with finite number $k$ of negative squares then $T$ has invariant non-positive
subspace of dimension $k$. The importance of results of this kind for
stability of some mechanical problems was discovered by S. L. Sobolev in 1938,
who proved the existence of non-positive eigenvectors in the case $k=1$.

Before giving precise formulations we introduce some notations. By indefinite
metric space we mean a linear space $H$ supplied with a semilinear form
$[x,y]$ satisfying the following condition: $H$ can be decomposed in a direct
sum of two subspaces $H_{+}$, $H_{-}$ ($x=x_{+}+x_{-}$, for each $x\in H$) in
such a way that $H$ is a Hilbert space with respect to the form
\[
(x,y)=[x_{+},y_{+}]-[x_{-},y_{-}].
\]
The decomposition of this kind is not unique but the dimensions of the summands
and the topology on $H$ do not depend on the choice of the decomposition. We
assume in what follows that $\dim H_{+}\geq\dim H_{-}$. If $\dim
H_{-}=k<\infty$ then one says that $H$ is a \textit{Pontryagin space }$\Pi
_{k}$, otherwise $H$ is called a \textit{Krein space}.

A vector $x\in H$ is called \textit{positive} (\textit{non-negative},
\textit{negative}, \textit{non-positive}, \textit{neutral}) if
\[
\lbrack x,x]>0\text{ }(\text{respectively }[x,x]\geq0,\text{ } [x,x]<0,\text{
}[x,x]\leq0,\text{ }[x,x]=0).
\]
A subspace is \textit{positive} (\textit{non-negative, non-positive,negative,
neutral}) if its non-zero elements are positive (respectively non-negative,
non-positive, negative, neutral). For brevity we write MNPS for maximal
non-positive subspaces.

Subspaces $H_{1},H_{2}$ of $H$ form \textit{a dual pair} if $H_{1}$ is
positive, $H_{2}$ is negative and $H=H_{1}+H_{2}$.

Sometimes it is convenient to start with a Hilbert space $H$ decomposed in the
orthogonal sum of two subspaces $H=H_{+}\oplus H_{-}$ and to set
\[
\lbrack x,y]=(x_{+},y_{+})-(x_{-},y_{-}).
\]
Denoting by $P_{+}$ and $P_{-}$ the projections onto $H_{+}$ and $H_{-},$
respectively, set $J=P_{+}-P_{-}.$ Then one can write the relation between two
"scalar products" in the form
\[
\lbrack x,y]=(Jx,y)\text{ and }(x,y)=[Jx,y].
\]
This notation determines the standard terminology. A space with indefinite
metric is often called a $J$-space, a vector $x$ is $J$-orthogonal to a vector
$y$ if $[x,y]=0$. An operator $B$ (we consider only bounded linear operators)
on $H$ is called $J$-\textit{adjoint} to an operator $A$ if $[Ax,y]=[x,By]$,
for all $x,y\in H$; we write $B=A^{\sharp}$. If $A^{\sharp}=A$ then $A$ is
called $J$-selfadjoint; an equivalent condition is $[Ax,x]\in\mathbb{R}$, for
all $x\in H$. If $\operatorname{Im}([Ax,x])\geq0$ for all $x$, then $A$ is
called $J$-dissipative.

Furthermore, $A$ is $J$-\textit{unitary} if $A^{\sharp}=A^{-1}$ (equivalently,
$A$ is surjective and $[Ax,Ay]=[x,y]$, for $x,y\in H$); $A$ is $J$%
-\textit{expanding} if $[Ax,Ax]\geq\lbrack x,x]$, for all $x\in H$.

In 1949 I. S. Iohvidov \cite{Ioh} constructed an analogue of Caley transform
for indefinite metric spaces which allowed him to deduce from Pontryagin's
Theorem the existence of an invariant MNSP for $J$-unitary operators on
$\Pi_{k}$-spaces. Then M. G. Krein \cite{Kr}, using absolutely different
approach, proved that a $J$-unitary (and, more generally, $J$-expanding)
operator $U$ in arbitrary indefinite metric space has an invariant MNPS, if
its "corner" $P_{-}UP_{+}$ is compact. Clearly, this condition holds in
$\Pi_{k}$-spaces. In 1964 Ky Fan \cite{KF} extended Krein's Theorem to
operators on Banach spaces preserving indefinite norms $\nu(x)=\Vert
(1-P)x\Vert-\Vert Px\Vert$ where $P$ is a projection of finite rank.

Now we have the following \textit{Pontryagin-Krein Theorem} (hereafter
\textit{PK-Theorem}).

\begin{theorem}
\label{PK} Let an operator $A$ on a Krein space $H$ be $J$-dissipative and let
$P_{+}AP_{-}$ be compact. Then there exists an MNPS invariant for $A$.
\end{theorem}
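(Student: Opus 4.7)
The plan is to reformulate the existence of an $A$-invariant MNPS as a fixed-point problem for a self-map of a compact convex set, and then invoke the Schauder--Tychonoff theorem. Decomposing $H = H_+ \oplus H_-$, one first checks that the MNPSes in $H$ are exactly the graphs $L_K = \{Kx + x : x \in H_-\}$ of contractions $K \in B(H_-, H_+)$ with $\|K\| \le 1$, and that $K \mapsto L_K$ is a bijection onto the set of MNPSes. Denoting by $\mathcal{B}$ the closed unit ball of $B(H_-, H_+)$, this identifies MNPSes with points of $\mathcal{B}$, a convex set that is compact in the weak operator topology by Banach--Alaoglu (since $B(H_-, H_+)$ is a dual Banach space).

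Writing $A$ in block form with entries $A_{\varepsilon\delta} = P_\varepsilon A P_\delta$ for $\varepsilon,\delta \in \{+,-\}$, invariance of $L_K$ under $A$ is equivalent to the Riccati-type identity
\[
K(A_{-+}K + A_{--}) = A_{++}K + A_{+-}.
\]
To turn this into a fixed-point equation I would pass from $A$ to a Cayley-type transform $U = (A - \mu I)(A - \bar\mu I)^{-1}$ for $\mu$ in the lower half-plane; $J$-dissipativity of $A$ translates into $J$-contractivity of $U$ (meaning $[Ux,Ux] \le [x,x]$), and compactness of $P_+ A P_-$ is inherited by the corresponding corner $U_{+-}$, while invariance of subspaces is preserved. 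A $J$-contractive operator has $U_{-+}K + U_{--}$ invertible for every $K \in \mathcal{B}$, so the M\"obius-type map
\[
\phi(K) = (U_{++}K + U_{+-})(U_{-+}K + U_{--})^{-1}
\]
is a well-defined self-map of $\mathcal{B}$ whose fixed points correspond precisely to $U$-invariant (hence $A$-invariant) MNPSes.

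The crux, and what I expect to be the main obstacle, is proving that $\phi$ is continuous on $\mathcal{B}$ in the weak operator topology. Joint multiplication of bounded operators is not WOT-continuous, so this does not come for free; this is exactly where the hypothesis on $P_+AP_-$ intervenes. When $K_\alpha \to K$ in the WOT on $\mathcal{B}$, the compact corner $U_{+-}$, together with the companion compactness that follows from the block structure of $J$-contractivity, forces the relevant products in the numerator to converge in the WOT. Uniform invertibility of $U_{-+}K + U_{--}$ on $\mathcal{B}$ (which one extracts from $J$-contractivity) then yields WOT-continuity of the inverse factor via a Neumann-series argument, and the compact-times-WOT-convergent estimate suffices to pass to the limit in the product.

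Once $\phi$ is established as a continuous self-map of the WOT-compact convex set $\mathcal{B}$, the Schauder--Tychonoff fixed-point theorem produces some $K_0 \in \mathcal{B}$ with $\phi(K_0) = K_0$; unwinding the Cayley transform yields the desired $A$-invariant MNPS $L_{K_0}$.
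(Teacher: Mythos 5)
Your strategy is the classical route of Krein, Iohvidov and Ky Fan, not the one this paper takes: Lomonosov's proof (via the $M$-convergence stability result, Theorem \ref{stable}, applied to the finite-dimensional truncations $P^{(k)}AP^{(k)}$, with the finite-dimensional case settled by perturbing to a strongly $J$-dissipative operator and taking a spectral subspace) was designed precisely to avoid the machinery you invoke. As written, your argument has two genuine gaps, located exactly where the paper warns that the classical route is ``very non-trivial''. The first is the Cayley transform step. Writing $U=(A-\mu I)(A-\bar\mu I)^{-1}=1+(\bar\mu-\mu)(A-\bar\mu I)^{-1}$, invariance of a closed subspace $L$ under $U$ is equivalent to its invariance under the resolvent $R=(A-\bar\mu I)^{-1}$, while invariance under $A$ is equivalent to invariance under $R^{-1}$. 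For an invertible operator $T$ and an infinite-dimensional closed subspace (and an MNPS has dimension $\dim H_-$, which is infinite in a general Krein space), the conditions $TL\subset L$ and $T^{-1}L\subset L$ are independent --- compare a bilateral shift and the Hardy subspace --- so ``$U$-invariant, hence $A$-invariant'' is unjustified; supplying this implication is the content of Iohvidov's theory. There is also a sign slip: for $x=(A-\bar\mu I)y$ one computes $[Ux,Ux]-[x,x]=-4\,\mathrm{Im}\,\mu\cdot\mathrm{Im}\,[Ay,y]$, so with $\mu$ in the lower half-plane your $U$ is $J$-\emph{expanding}, not $J$-contractive, and is adapted to maximal non-negative rather than non-positive subspaces.

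The second gap is the assertion that $U_{-+}K+U_{--}$ is invertible for every contraction $K$. Semi-definiteness of $UL_K$ only gives injectivity of $x\mapsto P_-U(Kx+x)$, because a non-positive subspace meets $H_+$ trivially; surjectivity onto $H_-$ is equivalent to $UL_K$ being again \emph{maximal} non-positive, which can fail for a general semi-definiteness-preserving operator. Proving it (together with the uniform bound on the inverses that your Neumann-series continuity argument requires) is itself a key lemma of Krein's proof: for $J$-unitary $U$ it is automatic, but for the operators produced by your Cayley transform it needs a separate argument. Your identification of the MNPSes with the WOT-compact ball, the Riccati identity, the role of the compact corner in making $\phi$ WOT-continuous, and the appeal to Schauder--Tychonoff are all sound and match Krein's Theorem \ref{MGK}; but to complete the proof along these lines you would have to reconstruct the two missing lemmas above, which is exactly the overhead that Lomonosov's elementary argument eliminates.
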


Note that the proof of Pontryagin's result in \cite{Pontr} was
very complicated and long. The Krein's proof in \cite{Kr} was short but far from
elementary, because it was based on the Schauder-Tichonov fixed-point Theorem. Moreover
 Ky Fan, to prove his version of the PK-Theorem, previously obtained a more
general fixed point theorem. We add that to deduce the result for $J$-dissipative
operators from the Krein's theorem about $J$-expanding operators, one needs to use
Iohvidov's Theory of Caley transformation for Krein spaces which is also very non-trivial.

In 1986 Victor Lomonosov in a talk at the Voronezh Winter School presented a
proof of Theorem \ref{PK} which was extremely short and completely elementary; this proof was published in \cite{Lom}. In Section 2 of our paper we present the
Lomonosov's proof in a complete form including the consideration of the finite-dimensional case. In Section 3 we consider the approach based on some fixed point theorems and discuss several results obtained on this way. In Section 4 we prove Theorem 4.1 which refines a theorem of M. Ostrovskii, V.S. Shulman and L. Turowska
\cite{OShT} about common fixed points for a group of fractional-linear maps of
operator ball. This allows us to estimate the similarity degree for a bounded representation of a group on a Hilbert space which preserves a quadratic form with finite number of negative squares. In Section 5  we prove by using Theorem 4.1 that any bounded quasi-positive definite function on a group is a difference
of two positive definite functions (this was known earlier only for amenable
groups). In the final section we discuss Burnside type counterparts of PK-Theorem.

\section{ Lomonosov's proof of PK-Theorem}

As usual, $\mathcal{B}(H_{1},H_{2})$ is the space of all bounded linear
operators from $H_{1}$ to $H_{2}$, and $\mathcal{B}(H) = \mathcal{B}(H,H)$ is the algebra of
all bounded linear operators on $H$. To any operator
$W:H_{-}\rightarrow H_{+}$ there corresponds the graph-subspace $L_{W}%
=\{x+Wx:x\in H_{-}\}$; it is easy to see that $L_{W}$ is maximal non-positive
if and only if $W$ is contractive, that is $\Vert W\Vert\leq1$. Conversely,
each MNPS is of the form $L_{W}$, for some contraction $W\in\mathcal{B}%
(H_{-},H_{+})$. It is not difficult to check that $L_{W}$ is invariant under
an operator $A\in\mathcal{B}(H)$ if and only if
\begin{equation}
WA_{11}+WA_{12}W-A_{21}-A_{22}W=0, \label{invar}%
\end{equation}
where
\begin{equation}
A_{11}=P_{-}AP_{-},\;A_{12}=P_{-}AP_{+},\;A_{21}=P_{+}AP_{-},\;A_{22}%
=P_{+}AP_{+}. \label{matr}%
\end{equation}
Lomonosov in {\cite{Lom}} introduced a "mixed" convergence $(M$%
-\textit{convergence}$)$ in $\mathcal{B}(H)$: a sequence $\{A^{(k)}%
\}_{k=1}^{\infty}$ of operators $M$-converges to an operator $A$, if
$A_{11}^{(k)}\rightarrow A_{11}$ and $(A_{22}^{(k)})^{\ast}\rightarrow
(A_{22})^{\ast}$ in the strong operator topology (SOT), $A_{21}^{(k)}%
\rightarrow A_{21}$ in the weak operator topology (WOT) and $A_{12}%
^{(k)}\rightarrow A_{12}$ in norm.

\begin{theorem}
\emph{\cite{Lom}}\label{stable} Let a sequence $\{A^{(k)}\}_{k=1}^{\infty}$ of
operators $M$-converge to an operator $A$. If each $A^{(k)}$ has an MNPS then
$A$ has an MNPS.
\end{theorem}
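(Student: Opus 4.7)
The plan is to realise the MNPSs of the $A^{(k)}$ as graphs $L_{W_k}$ of contractions $W_k\colon H_-\to H_+$, extract a weak-operator-topology limit $W$, and check that the algebraic invariance relation \eqref{invar} passes to the limit. For each $k$, fix a contraction $W_k\in\mathcal{B}(H_-,H_+)$ with $\|W_k\|\le 1$ whose graph $L_{W_k}$ is the given MNPS of $A^{(k)}$; then $W_k$ satisfies \eqref{invar} with the matrix entries of $A^{(k)}$. Since the closed unit ball of $\mathcal{B}(H_-,H_+)$ is compact in the weak operator topology, after passing to a subnet there is a contraction $W$ with $W_k\to W$ in WOT, and $L_W$ is automatically maximal non-positive.

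It remains to verify that $W$ satisfies \eqref{invar} for $A$. I test the equation for $A^{(k)}$ and $W_k$ against $x\in H_-$, $y\in H_+$ and pass to the limit in each of the four scalar summands; each summand is matched to precisely one ingredient of $M$-convergence. The WOT-convergence of $A_{21}^{(k)}$ gives $\langle A_{21}^{(k)}x,y\rangle\to\langle A_{21}x,y\rangle$ directly. For $\langle W_k A_{11}^{(k)}x,y\rangle$, I split $A_{11}^{(k)}x=A_{11}x+(A_{11}^{(k)}-A_{11})x$: the first piece contributes $\langle W_k A_{11}x,y\rangle\to\langle WA_{11}x,y\rangle$ by WOT-convergence of $W_k$ at the fixed vector $A_{11}x$, and the second is of order $\|(A_{11}^{(k)}-A_{11})x\|\,\|y\|\to 0$ by SOT-convergence of $A_{11}^{(k)}$. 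Rewriting $\langle A_{22}^{(k)}W_k x,y\rangle=\langle W_k x,(A_{22}^{(k)})^*y\rangle$, the SOT-convergence $(A_{22}^{(k)})^*y\to A_{22}^*y$ pairs with $W_k x\rightharpoonup Wx$ (norm times weak) to give $\langle Wx,A_{22}^*y\rangle=\langle A_{22}Wx,y\rangle$.

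The critical term, which I expect to be the main obstacle, is $\langle W_k A_{12}^{(k)} W_k x,y\rangle$. The uniform inequality
\[
\|W_k A_{12}^{(k)} W_k-W_k A_{12} W_k\|\le\|A_{12}^{(k)}-A_{12}\|\to 0,
\]
afforded by the norm hypothesis on $A_{12}^{(k)}$, reduces the problem to establishing $\langle W_k A_{12} W_k x,y\rangle\to\langle WA_{12}Wx,y\rangle$. This is quadratic in $W_k$ and does not follow from WOT-convergence alone: the identity $\langle W_k A_{12}W_k x,y\rangle=\langle A_{12}W_k x,W_k^*y\rangle$ confronts two only-weakly-convergent sequences in $H_-$. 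The plan is to exploit the norm approximation $A_{12}^{(k)}\to A_{12}$ a second time by combining it with the compactness that such a uniform approximation carries in practice: writing $A_{12}^{(k)}W_k x-A_{12}Wx=(A_{12}^{(k)}-A_{12})W_k x+A_{12}(W_k-W)x$, the first summand is norm-null, while the compactness of $A_{12}$ (for instance, $A_{12}$ is compact as the norm limit of the finite-rank corners $A_{12}^{(k)}$ in the intended applications) upgrades the weakly null $A_{12}(W_k-W)x$ to a norm-null sequence. This closes the pairing against $W_k^*y\rightharpoonup W^*y$ and yields $\langle A_{12}Wx,W^*y\rangle=\langle WA_{12}Wx,y\rangle$. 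Collecting the four limits gives $WA_{11}+WA_{12}W-A_{21}-A_{22}W=0$, so $L_W$ is an $A$-invariant MNPS.
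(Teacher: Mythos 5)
Your proposal follows exactly the route of the paper's (three-sentence) proof: realise each MNPS as the graph $L_{W_k}$ of a contraction satisfying \eqref{invar} for $A^{(k)}$, extract a WOT-convergent subsequence $W_k\to W$, and pass to the limit in the invariance relation. Your treatment of the three linear terms is correct and uses each ingredient of $M$-convergence exactly as intended.

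The one real issue is the quadratic term, and you have put your finger on it honestly: after the reduction via $\Vert A_{12}^{(k)}-A_{12}\Vert\to 0$, the convergence $\langle W_kA_{12}W_kx,y\rangle\to\langle WA_{12}Wx,y\rangle$ is obtained only by invoking compactness of $A_{12}$, which is \emph{not} among the hypotheses of Theorem \ref{stable}. This is not a removable blemish of your argument: the limit-passage step is genuinely false for non-compact $A_{12}$. Take $H_{-}=H_{+}=\ell^{2}$, $A_{12}^{(k)}=A_{12}=1$, $A_{11}^{(k)}=A_{22}^{(k)}=0$, $W_k=\tfrac12(S^{k}+S^{\ast k})$ with $S$ the unilateral shift (so $\Vert W_k\Vert=1$), and $A_{21}^{(k)}=W_kA_{12}W_k$; then each $A^{(k)}$ has the invariant MNPS $L_{W_k}$ and the sequence $M$-converges to the operator $A$ with $A_{21}=\tfrac14$, but $W_k\to 0$ in WOT while $W_kA_{12}W_k\to\tfrac14$ in WOT, so the WOT-limit contraction does not satisfy \eqref{invar}. (This does not refute the theorem itself --- here $W=\tfrac12$ works --- but it kills the argument by passage to the WOT-limit.) So what you have actually proved is the theorem under the additional hypothesis that $A_{12}$ is compact; with that hypothesis your argument is complete, since then $A_{12}W_kx\to A_{12}Wx$ in norm and pairs correctly against the bounded weakly convergent $W_k^{\ast}y$. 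That hypothesis holds in the only place the theorem is applied (the deduction of Theorem \ref{PK}, where the corners $A_{12}^{(k)}$ are finite rank and $A_{12}$ is compact), so your proof suffices for the paper's purposes; but as a proof of Theorem \ref{stable} verbatim it contains the same gap that the paper's assertion that the limit relation ``follows easily from the definition of $M$-convergence'' conceals, and you should either add compactness of $A_{12}$ to the statement or say explicitly that you prove only that case.
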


\begin{proof}
It follows from our assumptions, that for each $k$, there is a contraction
$W_{k}\in\mathcal{B}(H_{-},H_{+})$ satisfying
\begin{equation}
W_{k}A_{11}^{(k)}+W_{k}A_{12}^{(k)}W_{k}-A_{21}^{(k)}-A_{22}^{(k)}W_{k}=0.
\label{invarK}%
\end{equation}
Choosing a subsequence if necessary, one can assume that the sequence
$\{W_{k}\}_{k=1}^{\infty}$ WOT-converges to some contraction $W\in
\mathcal{B}(H_{-},H_{+})$. It follows easily from the definition of
$M$-convergence that $W$ satisfies (\ref{invar}).
\end{proof}

\begin{proof}
[Deduction of Theorem \ref{PK} from Theorem \ref{stable}]Denote by
$(P_{-}^{(k)})_{k=1}^{\infty}$ and $(P_{+}^{(k)})_{k=1}^{\infty}$ increasing
sequences of finite-dimensional projections such that $P_{-}^{(k)}%
\overset{\text{sot}}{\rightarrow}P_{-}$ and $P_{+}^{(k)}\overset{\text{sot}%
}{\rightarrow}P_{+}$, and set $P^{(k)}=P_{-}^{(k)}+P_{+}^{(k)}$. Then the
operators $A^{(k)}=P^{(k)}AP^{(k)}$ are $J$-dissipative, finite-dimensional
and $M$-converge to $A$ (the condition $\Vert A_{12}^{(k)}-A_{12}%
\Vert\rightarrow0$ follows from the compactness of $A_{12}$). To see that each
$A^{(k)}$ has an MNPS it suffices to show that any $J$-dissipative operator in
a finite-dimensional indefinite metric space has an MNPS.
\end{proof}

The proof of the PK-Theorem in the finite-dimensional case was dropped in
\cite{Lom} as an easy one. In fact, the usual proof of this theorem for
matrices (see e.g. \cite{Gohb}) is not simple and is not direct: it goes via
study of $J$-expanding operators and application of Caley transform. To
present Lomonosov's result in the complete form we add a short direct
proof for the finite-dimensional case \textit{which again uses Theorem
\ref{stable}}.

\begin{proof}
[Completion of the proof of Theorem \ref{PK}]
Let $A$ be a $J$-dissipative operator on a finite-dimensional indefinite
metric space $H$. For each $t>0$, the operator $B=A+tJ$ satisfies the
condition of \textit{strong }$J$\textit{-dissipativity}:
\[
\mathrm{Im\,}[Bx,x]>0\text{ if }x\neq0.
\]
Since $A+tJ\rightarrow A$ when $t\rightarrow0$, Theorem \ref{stable} allows us
to assume that $A$ is strongly dissipative. In this case $A$ has no real
eigenvalues: if $Ax=tx$, for some $t\in\mathbb{R}$ and $0\neq x\in H$, then
$[Ax,x]=t[x,x]\in\mathbb{R}$, a contradiction. Let us denote by $H_{+}$ and
$H_{-}$ the spectral subspaces of $A$ corresponding to sets $\mathbb{C}%
_{+}=\{z\in\mathbb{C}:\mathrm{Im}$\thinspace$z>0\}$ and $\mathbb{C}_{-}%
=\{z\in\mathbb{C}:\mathrm{Im\,}z<0\}$, respectively. We will show that
subspaces $H_{+}$ and $H_{-}$ are positive and negative, respectively.

If an operator $T$ is strongly $J$-dissipative then also $-T^{-1}$ is strongly
$J$-dissipative. Indeed,
\[
-\operatorname{Im}\,[T^{-1}x,x]=\operatorname{Im}\,[x,T^{-1}%
x]=\operatorname{Im}\,[TT^{-1}x,T^{-1}x]>0
\]
if $x\neq0$. Since $A-t1$ is strongly $J$-dissipative, for each $t\in
\mathbb{R}$, we get that $-(A-t1)^{-1}$ is strongly $J$-dissipative. Now, for
each $0\neq x\in H_{+}$, one has
\[
x=\frac{i}{\pi}\int_{-\infty}^{\infty}\left(  A-t1\right)  ^{-1}xdt
\]
whence
\[
\lbrack x,x]=\operatorname{Re}\,[x,x]=-\operatorname{Im}\,\left(  \frac{1}%
{\pi}\int_{-\infty}^{\infty}[(A-t1)^{-1}x,x]dt\right)  >0.
\]
Thus $H_{+}$ is positive. Similarly, $H_{-}$ is negative.

So $H=H_{-}+H_{+}$ is the decomposition of $H$ into the direct sum of a
negative subspace and a positive subspace. It follows that $H_{\boldsymbol{-}%
}$ is an invariant MNPS. $\ \ \ $

\end{proof}

\bigskip

We add that

\begin{itemize}
\item In works of T. J. Azizov, H. Langer, A. A. Shkalikov and other
mathematicians Theorem \ref{invar} was extended to various classes of
unbounded operators (see for example \cite{Shkal} and references therein);

\item M. A. Naimark \cite{Naim1} proved that any commutative family $Q$ of
$J$-selfadjoint operators in a $\Pi_{k}$-space has a common invariant MNPS. It
follows that the result holds for any commutative family $Q$ of operators
which is $J$-\textit{symmetric}: $T\in Q$ implies $T^{\sharp}\in Q$.
\end{itemize}

\section{Fixed points}

Let us return to Krein's proof of the existence of invariant MNPS for
$J$-unitary operators. It is clear that any $J$-unitary operator $U$ maps any
MNPS onto an MNPS. Using the bijection $W\mapsto L_{W}$ between MNP subspaces
and contractions we see that $U$ determines the map $\phi_{U}$ from the closed
unit ball $\mathcal{B}_{1}(H_{-},H_{+})$ of the space $\mathcal{B}(H_{-}%
,H_{+})$ into itself. It is easy to obtain the direct expression of $\phi_{U}$
in terms of $U$:
\begin{equation}
\phi_{U}(W)=(U_{21}+U_{22}W)(U_{11}+U_{12}W)^{-1}\label{fr-li}%
\end{equation}
(we use notation from (\ref{matr})). It was shown in \cite{Kr} that if
$U_{12}$ is compact then the map $\phi_{U}$ is WOT-continuous; since
$\mathcal{B}_{1}(H_{-},H_{+})$ is WOT-compact, the fixed-point theorem implies
the existence of a contraction $W$ with $\phi_{U}(W)=W$. This means that
$L_{W}$ is invariant with respect to $U$. We get the following result:

\begin{theorem}
\cite{Kr} \label{MGK} Let $U$ be a $J$-unitary operator on a Krein space
$H=H_{+}+H_{-}$. If the \textquotedblleft corner \textquotedblright\ $U_{12}$
in the block-matrix of $U$ with respect to the decomposition $H=H_{+}+H_{-}$
is compact then $U$ has an invariant MNPS.
\end{theorem}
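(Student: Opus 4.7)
The plan is to realize the desired MNPS as a fixed point of the map $\phi_U$ on the closed unit ball $\mathcal{B}_1(H_-, H_+)$, exactly as sketched in the paragraph preceding the statement; the fixed point will be produced via the Schauder-Tichonov theorem. First I would verify that $\phi_U$ really sends $\mathcal{B}_1$ to itself: because $U$ is $J$-unitary (hence bijective and preserving $[\cdot,\cdot]$) it carries MNP subspaces to MNP subspaces, and reading off the image of $L_W$ on the generators $x + Wx$ both forces $U_{11} + U_{12}W : H_- \to H_-$ to be bijective and yields the explicit formula (\ref{fr-li}).

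Next I would equip $\mathcal{B}_1(H_-, H_+)$ with the weak operator topology, making it a non-empty, convex, compact set (by Banach-Alaoglu, since WOT on bounded sets coincides with the weak-$*$ topology inherited from the trace-class predual). The substance of the argument is then to check that $\phi_U$ is WOT-continuous, and this is the only place the compactness hypothesis on $U_{12}$ is used. If $W_\alpha \to W$ in WOT inside $\mathcal{B}_1$, then for each $x \in H_-$ the net $(W_\alpha - W)x$ tends to zero weakly in $H_+$, and compactness of $U_{12}$ converts this to norm convergence of $U_{12}(W_\alpha - W)x$, so $U_{12}W_\alpha \to U_{12}W$ in the strong operator topology. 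Granted a uniform bound on $\|(U_{11} + U_{12}W)^{-1}\|$ over $W \in \mathcal{B}_1$, a standard perturbation estimate then yields SOT-convergence of the inverses, and since $U_{21} + U_{22}W_\alpha \to U_{21} + U_{22}W$ in WOT, the product $\phi_U(W_\alpha)$ converges to $\phi_U(W)$ in WOT.

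The chief obstacle I foresee is precisely this uniform invertibility: a direct energy estimate using $[Ux, Ux] = [x, x]$ only gives $\|(U_{11} + U_{12}W)x\|^2 \geq \|x\|^2 - \|Wx\|^2$, which degenerates as $\|Wx\|$ approaches $\|x\|$. To circumvent this I would exploit that $U^{-1}$ is also $J$-unitary and that $\phi_{U^{-1}} \circ \phi_U = \mathrm{id}$; applying (\ref{fr-li}) to $U^{-1}$ produces an explicit expression for $(U_{11} + U_{12}W)^{-1}$ as a rational combination of the blocks of $U^{-1}$ and the contraction $\phi_U(W)$, from which the uniform bound is immediate. Once WOT-continuity of $\phi_U$ is established, Schauder-Tichonov produces a fixed point $W \in \mathcal{B}_1(H_-, H_+)$, and $L_W$ is the invariant MNPS asserted by the theorem.
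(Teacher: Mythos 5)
Your argument is correct and follows exactly the route the paper takes (and attributes to Krein): realize the MNPS as a Schauder--Tichonov fixed point of $\phi_U$ on the WOT-compact convex ball $\mathcal{B}_1(H_-,H_+)$, with compactness of $U_{12}$ converting WOT-convergence of $W_\alpha$ into SOT-convergence of $U_{12}W_\alpha$ and hence WOT-continuity of $\phi_U$. The paper only sketches this step, so your identity $(U_{11}+U_{12}W)^{-1}=(U^{-1})_{11}+(U^{-1})_{12}\,\phi_U(W)$, which yields the uniform bound on the inverses needed to pass to the limit, is a correct completion of the detail the paper leaves to the reference.
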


This result can be reformulated independently of the choice of the
decomposition $H=H_{+}+H_{-}$ and without matrix terminology:

\begin{theorem}
If $J$-unitary operator $U$ on a Krein space $H$ is a compact perturbation of
an operator that preserves a maximal negative subspace, then it has an
invariant MNPS.
\end{theorem}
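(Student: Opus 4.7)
The plan is to reduce this statement to Theorem \ref{MGK} by selecting a fundamental decomposition of $H$ adapted to the invariant subspace of the unperturbed operator and then passing to $U^{-1}$ so that the compact corner ends up on the correct side. Write $U=V+K$ with $K$ compact and $V$ leaving invariant a maximal negative subspace $M$. Since $M$ is maximal (uniformly) negative, its $J$-orthogonal complement $M^{\perp_{J}}$ is a maximal uniformly positive subspace and $H=M^{\perp_{J}}\dotplus M$ is a fundamental decomposition; the associated Hilbert space structure is equivalent to the original one, and the indefinite form $[\cdot,\cdot]$, the $J$-unitarity of $U$, and the notion of MNPS all depend only on $[\cdot,\cdot]$, hence are unchanged under this change of decomposition.

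In this new decomposition, using the block notation of (\ref{matr}), the hypothesis $VM\subseteq M$ gives $V_{21}'=0$, so
\[
U_{21}'=V_{21}'+K_{21}'=K_{21}'
\]
is compact. Since Theorem \ref{MGK} requires the $(1,2)$-block to be compact, I pass to the $J$-unitary operator $U^{-1}=U^{\sharp}$; a direct block calculation from $U^{\sharp}=JU^{*}J$, with $J$ the fundamental symmetry of the new decomposition, gives $(U^{-1})_{12}'=-(U_{21}')^{*}$, which is again compact. Theorem \ref{MGK} applied to $U^{-1}$ then yields an MNPS $L$ invariant under $U^{-1}$; since $U$ is a $J$-unitary bijection mapping MNPS's to MNPS's, the inclusion $U^{-1}L\subseteq L$ rearranges to $L\subseteq UL$ with both sides maximal non-positive, so $UL=L$ by maximality, and $L$ is also invariant under $U$.

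The main subtlety lies in interpreting ``preserves a maximal negative subspace'' in the hypothesis. The argument above proceeds smoothly if $M$ is maximal \emph{uniformly} negative, equivalently $M=L_{W}$ for a strict contraction $W$ with $\Vert W\Vert<1$, since this is exactly the condition that $H=M^{\perp_{J}}\dotplus M$ be a fundamental decomposition. If $M$ is only required to be pointwise strictly negative (so that $\Vert W\Vert=1$ but the norm is not attained), one would first approximate $W$ by $(1-\varepsilon)W$ to move into the uniformly negative regime and then invoke Lomonosov's stability theorem (Theorem \ref{stable}) as $\varepsilon\to0$ to recover the conclusion in the limit.
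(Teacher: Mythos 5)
Your proof is correct and takes essentially the same route as the paper's: pass to the fundamental decomposition $H=M^{\perp}\dotplus M$ determined by the invariant maximal (uniformly) negative subspace, note that the off-diagonal corner of $U$ in that decomposition coincides with the corresponding corner of the compact perturbation, and invoke Theorem \ref{MGK}. Your added care about which corner is compact (handled by passing to $U^{-1}$) and about uniform negativity tightens points the paper treats implicitly; only the closing remark about approximating $W$ by $(1-\varepsilon)W$ in the non-uniform case is left unsubstantiated, since the graph of $(1-\varepsilon)W$ need no longer be invariant under the unperturbed operator, but this concerns an edge reading of the hypothesis rather than the main argument.
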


To prove this
let $U=R+K$, where $K$ is compact, $R$ preserves a maximal negative subspace
$L\subset H$. Let $M=L^{\bot}$, and let $P$ be the projection onto $L$ along
$M$. Then
\[
(1-P)UP=(1-P)RP+(1-P)KP=(1-P)KP
\]
is a compact operator. But $(1-P)UP$ is the corner of the block-matrix $U$
with respect to the decomposition $H=L+M$. So, by Krein's theorem, $U$ has an
invariant MNPS.

\bigskip

Note that for $\Pi_{k}$-spaces the assumption of compactness of $U_{21}$ is
automatically satisfied, so Krein's Theorem implies that any $J$-unitary
operator on a $\Pi_{k}$-space has an invariant MNPS.

\bigskip

The fractional-linear maps $\phi_{U}$ defined by (\ref{fr-li}) preserve the
open unit ball $\mathfrak{B}=\{X\in\mathcal{B}(H_{-},H_{+}):\Vert X\Vert<1\}$
and their restrictions to $\mathfrak{B}$ form the group of all biholomorphic
automorphisms of $\mathfrak{B}$ (we refer to \cite{AI} or \cite{KSh} for more
information). So the existence of fixed points for such maps and families of
such maps are of independent interest. After Naimark's result it was natural
to try to prove the existence of common fixed points for commutative sets of
fractional-linear maps. Note that this does not follow directly from Naimark's
Theorem, because the maps $\phi_{U}$ and $\phi_{V}$ commute if and only if the
operators $U$ and $V$ commute \textit{up to a scalar multiple}: $UV=\lambda
VU$, $\lambda\in\mathbb{C}$. The positive answer was obtained by J.W. Helton:

\begin{theorem}
\cite{Helt1}\label{Helton} Let $H_{1}$, $H_{2}$ be Hilbert spaces and $\dim
H_{1}<\infty$. Then any commutative family of fractional-linear maps of the
closed unit ball in $\mathcal{B}(H_{1},H_{2})$ has a common fixed point.
\end{theorem}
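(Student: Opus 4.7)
The plan is to argue by induction on $k=\dim H_1$, using Theorem~\ref{MGK} (applicable here because $\dim H_1<\infty$ makes every corner of the underlying $J$-unitary operator compact) to produce a fixed point for a single distinguished map in the family, and then reducing to a commutative family on a smaller operator ball by linearising that map at its fixed point. The base cases $k=0$ (trivial) and $k=1$ (a classical common fixed point theorem for commuting biholomorphic self-maps of the closed unit ball of $H_2$, provable by a Wolff--Denjoy type iteration once Krein's theorem supplies individual fixed points) are handled directly.

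For the inductive step, choose a non-identity $\phi_0=\phi_{U_0}\in\mathcal{F}$ and let $W_0\in\mathcal{B}_1(H_1,H_2)$ be one of its fixed points, supplied by Theorem~\ref{MGK}. If $\|W_0\|<1$, the biholomorphic automorphism group of the open ball acts transitively, so one can conjugate $\mathcal{F}$ by a fractional-linear automorphism sending $W_0$ to $0$; we may then assume $\phi_0(0)=0$. A Cartan-type uniqueness argument (cf.\ \cite{AI,KSh}) forces $\phi_0$ to be linear of the form
\[
\phi_0(W)=V_2 W V_1,\qquad V_1\in\mathcal{B}(H_1),\ V_2\in\mathcal{B}(H_2)\ \text{unitary}.
\]
Spectrally decomposing $V_1=\sum_j\alpha_j P_j$ and $V_2=\sum_k\beta_k Q_k$, the equation $V_2 W V_1=W$ forces $Q_k W P_j=0$ whenever $\alpha_j\beta_k\ne 1$. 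Hence the fixed-point set $F_0$ of $\phi_0$ is either the singleton $\{0\}$ --- in which case $0$ is already a common fixed point by commutativity --- or a product $\prod\mathcal{B}_1(P_j H_1,Q_k H_2)$ of strictly smaller operator balls. Since $\mathcal{F}$ preserves $F_0$ and by continuity permutes its factors, passing to an $\mathcal{F}$-minimal WOT-closed invariant subset of $F_0$ reduces the problem to a single factor, on which the inductive hypothesis applies and yields a common fixed point that is then transported back via the original conjugation.

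The main obstacles are two degenerate situations. First, the boundary case $\|W_0\|=1$: then no interior-to-$0$ reduction is available, and one must invoke a Denjoy--Wolff style horosphere analysis to iterate towards a common boundary fixed point. Second, the scalar case where for every $\phi\in\mathcal{F}$ stabilising $0$ the operator $V_1^{(\phi)}$ happens to be scalar, so the induction in $\dim H_1$ stalls; here the maps in the stabiliser of $0$ are effectively left multiplications $W\mapsto\alpha_\phi V_2^{(\phi)}W$, and one has to build a common fixed operator from the joint $\alpha_\phi$-eigenspaces of the unitaries $V_2^{(\phi)}$ in $H_2$, exploiting the finite dimensionality of $H_1$. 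Resolving these two technicalities is where the bulk of the argument lies; once they are handled the induction on $k$ closes and the common fixed point in the original ball is obtained.
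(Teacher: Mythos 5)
First, a framing point: the paper does not prove Theorem \ref{Helton} at all --- it is quoted from Helton's article \cite{Helt1}, and the surrounding text only remarks that Helton's argument \emph{uses} Naimark's theorem on commutative families of $J$-unitary operators. So there is no in-paper proof to compare against, and your proposal must stand on its own. It does not yet: what you have written is a programme whose two hardest steps are explicitly left open. You name them yourself --- the boundary case $\Vert W_0\Vert=1$ and the ``scalar'' case where the induction on $k=\dim H_1$ stalls --- and then state that ``resolving these two technicalities is where the bulk of the argument lies.'' These are not technicalities; they are the theorem. Theorem \ref{MGK} only produces a fixed point in the \emph{closed} ball, and for a general automorphism $\phi_{U_0}$ all fixed points may lie on the boundary, in which case the entire mechanism of your induction (M\"obius-conjugate $W_0$ to $0$, apply Cartan uniqueness, linearise, split the fixed-point set) is unavailable; a Denjoy--Wolff/horosphere theory for the operator ball with $\dim H_2=\infty$ is not an off-the-shelf tool one can simply invoke, and commuting maps with only boundary fixed points genuinely occur (already for $\dim H_1=\dim H_2=1$ this is the parabolic case of the disc). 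Similarly, in the scalar case the fixed-point set of $\phi_0$ is a ball $\mathcal{B}_1(H_1,K)$ with $K\subsetneq H_2$: only the infinite-dimensional side shrinks, the induction parameter $\dim H_1$ does not decrease, and you supply no termination or compactness argument for the resulting chain of sub-balls.

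There are also secondary gaps in the inductive step itself. The decomposition $V_2=\sum_k\beta_k Q_k$ is not available when $\dim H_2=\infty$ (this one is repairable: only the eigenspaces $\ker(V_2-\alpha_j^{-1})$ for the finitely many eigenvalues $\alpha_j$ of $V_1$ matter, since $W=\sum_j WP_j$). More seriously, the claim that each $\phi\in\mathcal{F}$ ``by continuity permutes the factors'' of $F_0=\prod_j\mathcal{B}_1(P_jH_1,K_j)$ is a Cartan-type rigidity statement for products of operator balls that needs proof; and even granting it, passing to a minimal closed invariant subset does not ``reduce to a single factor'' when the permutation is nontrivial --- a common fixed point of a factor-swapping map has linked components across factors, not a component confined to one factor, so the inductive hypothesis cannot be applied factorwise as stated. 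In summary, the skeleton (reduce via an interior fixed point of one map, linearise, induct) is a reasonable and recognisable strategy, but as submitted it is an outline with the decisive cases deferred; Helton's actual route, per the paper's remark, leans on Naimark's commutative-family theorem precisely to avoid these difficulties.
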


This result implies Naimark's Theorem, but the proof uses it. Another result
of Helton \cite{Helt2} based on the consideration of fractional-linear maps
states that a commutative group of $J$-unitary operators on a Krein space
$H_{1}\oplus H_{2}$ has an invariant maximal positive subspace if it contains
a compact perturbation of an operator $A\oplus B$ with $\sigma(A)\cap
\sigma(B)=\emptyset$. This extends the Naimark Theorem because the identity
operator $1$ in a $\Pi_{k}$-space is a compact perturbation of $J$.

The following result on fixed points of groups of fractional-linear maps was
proved by M. Ostrovskii, V. S. Shulman and L. Turowska \cite{OShT, OShT2} (see
also \cite{Sh0} where the case $k=1$ was considered).

\begin{theorem}
\label{orbit} Let $\dim H_{2}=k<\infty$ and let a group $\Gamma$ of
fractional-linear maps of the open unit ball $\mathfrak{B}$ in $\mathcal{B}%
(H_{2},H_{1})$ have an orbit separated from the boundary \emph{(}$\sup
_{\phi\in\Gamma}\Vert\phi(K)\Vert<1$, for some $K\in\mathfrak{B}$\emph{)}.
Then there is $K_{0}\in\mathfrak{B}$ such that $\phi(K_{0})=K_{0}$, for all
$\phi\in\Gamma$.
\end{theorem}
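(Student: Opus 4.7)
The plan is to regard $\mathfrak{B}$ as a space of non-positive curvature on which $\Gamma$ acts by isometries of a natural invariant metric $\rho$, to translate the separation-from-boundary hypothesis into $\rho$-boundedness of the orbit, and to produce a common fixed point as the Chebyshev center of the orbit closure. Finite-dimensionality of $H_2$ is used both for weak-operator compactness and for continuity of the $\Gamma$-action.

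Set $r = \sup_{\phi \in \Gamma}\|\phi(K)\|$; by hypothesis $r < 1$, and the orbit $\Gamma K$ lies in $r\overline{\mathfrak{B}}$. Since $\dim H_2 = k < \infty$, the Banach space $\mathcal{B}(H_2,H_1)$ is linearly isomorphic to $H_1^{\oplus k}$, so every norm-bounded set is relatively WOT-compact. Hence the WOT-closure $C$ of the orbit is a WOT-compact subset of $r\overline{\mathfrak{B}} \subset \mathfrak{B}$. Examining the formula (\ref{fr-li}), the denominator $U_{11}+U_{12}W$ is a $k\times k$ matrix, so its inversion is norm-continuous in the entries, and the map $W \mapsto U_{12}W$ is WOT-continuous on bounded sets because $U_{12}$ has range in the finite-dimensional space $H_2$. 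Combined with the WOT-continuity of $W \mapsto U_{22}W$, this shows that every $\phi \in \Gamma$ is WOT-continuous on $r\overline{\mathfrak{B}}$, so $\phi(C)=C$ for each $\phi \in \Gamma$.

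The ball $\mathfrak{B}$ carries a canonical hyperbolic (Kobayashi/Bergman-type) metric $\rho$ for which every fractional-linear map is an isometry, and whose bounded sets are exactly the subsets of $\mathfrak{B}$ separated from the boundary in operator norm; this metric is geodesic and its squared distance is strictly convex along geodesics (the non-positive-curvature property). Consider
\[
F(X) = \sup_{Y \in C} \rho(X,Y), \qquad X \in \mathfrak{B}.
\]
The function $F$ is convex and $\Gamma$-invariant, since each $\phi \in \Gamma$ is a $\rho$-isometry permuting $C$. A Chebyshev/Cartan-center argument then produces a unique minimizer $K_0 \in \mathfrak{B}$ of $F$: existence from WOT-compactness of $C$ and WOT-lower semicontinuity of $F$, uniqueness from the strict geodesic convexity of $X \mapsto \rho(X,Y)^2$. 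The $\Gamma$-invariance of $F$ forces $\phi(K_0)=K_0$ for every $\phi\in\Gamma$.

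The main difficulty lies in making the Chebyshev-center step genuinely rigorous in the potentially infinite-dimensional setting, since $\mathfrak{B}$ is modeled on $\mathcal{B}(H_2,H_1)$. Fixing the correct invariant metric, verifying the appropriate CAT(0)-type convexity inequality, and ensuring that the minimizer does not escape to the boundary constitute the nontrivial part of the argument. Here the bound $r<1$ plays the essential role of confining the center to a $\rho$-bounded region, while the finite dimension of $H_2$ permits, if necessary, a reduction to a $\Gamma$-essentially finite-dimensional subball on which classical Hermitian symmetric space theory applies and WOT-limits can be taken.
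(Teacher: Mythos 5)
Your first two steps---passing to the invariant metric $\rho(A,B)=\tanh^{-1}(\|\mu_{-A}(B)\|)$ and translating the separation-from-boundary hypothesis into $\rho$-boundedness of the orbit---agree with the paper (this is exactly the content of Lemma \ref{fp} there). The fixed-point mechanism you propose, however, has a genuine gap: the Chebyshev/circumcenter argument requires that $X\mapsto\rho(X,Y)^{2}$ be strictly convex along geodesics (a CAT(0)-type inequality), and this is \emph{false} for the operator ball once $k\geq2$. Already on the subball of diagonal contractions in $\mathcal{B}(\mathbb{C}^{2},\mathbb{C}^{2})$ the Carath\'eodory metric $\rho$ restricts to the \emph{sup}-product of two Poincar\'e discs, $\rho=\max(\rho_{1},\rho_{2})$, for which the circumcenters of even a two-point set form a nondegenerate continuum rather than a single point. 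Since your conclusion $\phi(K_{0})=K_{0}$ rests entirely on the \emph{uniqueness} of the minimizer of $F(X)=\sup_{Y\in C}\rho(X,Y)$ (without uniqueness, $\Gamma$-invariance of $F$ only shows that $\Gamma$ permutes the set of minimizers), the argument collapses at precisely the step you yourself flag as ``the nontrivial part.'' A secondary problem: $X\mapsto\rho(X,Y)$ is not WOT-lower semicontinuous, because $X\mapsto X^{\ast}X$ and $X\mapsto XX^{\ast}$ are not WOT-continuous, so even the existence of a minimizer cannot be obtained the way you indicate.

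The paper (following \cite{OShT}) avoids the circumcenter precisely because this convexity fails. It establishes instead (citing \cite[Theorem 6.1]{OShT}) that $(\mathfrak{B},\rho)$ is \emph{ball-compact} and has \emph{normal structure}, and then runs a Kirk-type argument (Lemma \ref{fp-gen}): Zorn's lemma produces a minimal nonempty $\Gamma$-invariant ball-convex subset of the intersection of all balls containing the orbit, and normal structure (the existence of a non-diametral point in any ball-convex set with more than one element) forces that minimal set to be a singleton, which is then the common fixed point. To repair your outline you would have to replace ``strict convexity of $\rho^{2}$'' by a proof of normal structure and ball-compactness for $(\mathfrak{B},\rho)$---which is the actual hard content of \cite{OShT} and is where the hypothesis $\dim H_{2}<\infty$ is genuinely used.
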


\begin{corollary}
\label{bounded} Any bounded group of $J$-unitary operators in a $\Pi_{k}%
$-space has an invariant dual pair of subspaces.
\end{corollary}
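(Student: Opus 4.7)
The plan is to apply Theorem~\ref{orbit} to the group $\Gamma=\{\phi_{U}:U\in G\}$ of fractional-linear automorphisms of the open unit ball $\mathfrak{B}\subset\mathcal{B}(H_{-},H_{+})$ (note $\dim H_{-}=k<\infty$), and then convert the resulting common fixed point into an invariant dual pair by passing to the $J$-orthogonal complement.

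The main obstacle is verifying the orbit-separation hypothesis of Theorem~\ref{orbit}, and this is the only place where the boundedness of $G$ is used. I would test at $0\in\mathfrak{B}$, where (\ref{fr-li}) gives $\phi_{U}(0)=U_{21}U_{11}^{-1}$. Setting $M=\sup_{U\in G}\Vert U\Vert$ and applying the $J$-unitarity identity $[Ux,Ux]=[x,x]$ to $x\in H_{-}$ gives $\Vert U_{11}x\Vert^{2}-\Vert U_{21}x\Vert^{2}=\Vert x\Vert^{2}$, so $U_{11}$ is bounded below (hence invertible). Substituting $z=U_{11}^{-1}y$ yields
\[
\Vert\phi_{U}(0)y\Vert^{2}=\Vert U_{21}z\Vert^{2}=\Vert U_{11}z\Vert^{2}-\Vert z\Vert^{2}=\Vert y\Vert^{2}-\Vert U_{11}^{-1}y\Vert^{2}\leq\bigl(1-\tfrac{1}{\Vert U_{11}\Vert^{2}}\bigr)\Vert y\Vert^{2}.
\]
Since $\Vert U_{11}\Vert\leq\Vert U\Vert\leq M$, this gives $\Vert\phi_{U}(0)\Vert\leq\sqrt{1-M^{-2}}<1$ uniformly in $U\in G$. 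Theorem~\ref{orbit} then produces $W\in\mathfrak{B}$ with $\phi_{U}(W)=W$ for every $U\in G$.

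Because $\phi_{U}(W)=W$ is equivalent to $U(L_{W})=L_{W}$, the graph $L_{W}=\{x+Wx:x\in H_{-}\}$ is $G$-invariant, and $\Vert W\Vert<1$ makes it strictly negative (and of dimension $k$), hence an MNPS. To manufacture the positive partner I would identify its $J$-orthogonal complement $L_{W}^{[\perp]}:=\{y\in H:[y,\xi]=0\text{ for all }\xi\in L_{W}\}$ with the graph $\{y_{+}+W^{\ast}y_{+}:y_{+}\in H_{+}\}$ of $W^{\ast}:H_{+}\to H_{-}$ via the direct calculation $[y,x+Wx]=(W^{\ast}y_{+},x)-(y_{-},x)$; strict positivity then follows from $\Vert W^{\ast}\Vert<1$, $G$-invariance follows from $[Uy,U\xi]=[y,\xi]$ together with $U(L_{W})=L_{W}$, and the decomposition $H=L_{W}^{[\perp]}+L_{W}$ is obtained by inverting $I-WW^{\ast}$ to solve $h=(x+Wx)+(y_{+}+W^{\ast}y_{+})$ uniquely for each $h\in H$. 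Hence $(L_{W}^{[\perp]},L_{W})$ is the desired $G$-invariant dual pair.
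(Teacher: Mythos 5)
Your proof is correct and follows essentially the route the paper intends: the corollary is presented as an immediate consequence of Theorem \ref{orbit}, your bound $\Vert\phi_{U}(0)\Vert\leq\sqrt{1-M^{-2}}$ is a self-contained variant of the paper's own estimate (\ref{codirect}) in Lemma \ref{estimates}, and the passage from the common fixed point $W$ to the dual pair $(L_{W}^{[\perp]},L_{W})$ is the standard graph/$J$-orthogonal-complement argument that the paper uses implicitly (and reproduces in Section 4 by conjugating with $M_{K}$). No gaps.
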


We will obtain some related results in the next two sections.

\section{Orthogonalization of bounded representations}

In many situations (see the book \cite{Pisier} for examples and discussions)
it is important to know if a given representation $\pi$ of a group $G$ in a
Hilbert space is similar to a unitary representation:
\[
\pi(g)=V^{-1}U(g)V,\text{ for all }g\in G,
\]
the operators $U(g)$ are unitary, $V$ is an invertible operator. The infimum
$c(\pi)$ of values $\Vert V\Vert\Vert V^{-1}\Vert$ for all possible $V$'s, is
called the constant of similarity of $\pi$. It is obvious that a
representation can be similar to a unitary one only if it is bounded:
\[
\Vert\pi\Vert:=\sup_{g\in G}\Vert\pi(g)\Vert<\infty;
\]
clearly $\Vert\pi\Vert\leq c(\pi)$.

By a quadratic form we mean a function $\Phi(x)=(Ax,x)$ on a Hilbert space
$H$, where $A$ is an invertible selfadjoint operator on $H$. Changing the
scalar product if necessary, one can reduce the situation to the case that
\begin{equation}
\Phi(x)=(P_{1}x,x)-(P_{2}x,x), \label{form}%
\end{equation}
where $P_{1}$ and $P_{2}$ are projections with $P_{1}+P_{2}=1$ (if a form is
given as above then $P_{1}$ and $P_{2}$ are spectral projections of $A$
corresponding to the intervals $(-\infty,0)$ and $(0,\infty)$. So we consider only
forms given by (\ref{form}). The number $\dim(P_{2}H)$ is called the number of
negative squares of $\Phi$.

A representation $\pi$ is said to preserve the form (\ref{form}) if $\Phi
(\pi(g)x) = \Phi(x)$, for all $x\in H, g\in G$.

\begin{theorem}
\label{ort} Any bounded representation $\pi$ preserving a form with finite
number of negative squares is similar to a unitary representation. Moreover,
\begin{equation}
c(\pi)\leq2\Vert\pi\Vert^{2}+1. \label{cepi}%
\end{equation}

\end{theorem}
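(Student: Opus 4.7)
My plan is to recognise $\pi(G)$ as a bounded group of $J$-unitary operators on a Pontryagin space, extract an invariant dual pair via Theorem \ref{orbit}, and build the orthogonalising similarity from it.

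Preservation of $\Phi$ is equivalent to $\pi(g)^{\sharp} = \pi(g)^{-1}$ for $J := P_1 - P_2$, so the fractional-linear maps $\phi_g := \phi_{\pi(g)}$ form a group $\Gamma$ on $\mathfrak{B} \subset \mathcal{B}(H_2, H_1)$. To invoke Theorem \ref{orbit} I separate the orbit of $0$ from $\partial\mathfrak{B}$: writing $\pi(g) = U$ in blocks, the $J$-unitary identity $U_{11}^{*}U_{11} = I + U_{21}^{*}U_{21}$ yields $\|\phi_g(0)\|^2 = 1 - \|U_{11}\|^{-2}$, while evaluating $\pi(g)$ on vectors of $H_2$ gives $\|\pi(g)\|^2 \geq 1 + 2\|U_{21}\|^2$; combining these, $\|\phi_g(0)\| \leq r := \sqrt{(\|\pi\|^2-1)/(\|\pi\|^2+1)}$ uniformly in $g$. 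Theorem \ref{orbit} supplies a common fixed point $K_0 \in \mathfrak{B}$, and inspection of its proof (a fixed-point argument on a closed Möbius-invariant ball containing the orbit of $0$) locates $K_0$ inside the closed norm ball of radius $r$, so $\rho := \|K_0\| \leq r$.

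Now $L_{K_0}$ is a $\pi$-invariant MNPS and, by $J$-unitarity, so is its $J$-orthogonal complement $M := L_{K_0}^{[\perp]}$, a maximal strictly positive subspace since $\rho < 1$; together they form an invariant dual pair. Let $F$ be the projection onto $M$ along $L_{K_0}$: it commutes with $\pi(g)$, and $F^{\sharp} = F$ by the $J$-orthogonality of the pair. I define
\[
\langle x, y\rangle := [Fx, Fy] - [(I-F)x, (I-F)y] = (Tx, y), \qquad T := J(2F - I),
\]
the simplification using $[Fx, (I-F)y] = 0$ and $F^{\sharp} = F$; this form is positive definite, $\pi$-invariant and equivalent to $(\cdot,\cdot)$. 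Because $J^2 = (2F-I)^2 = I$, one computes $T^2 = (2F-I)^{*}(2F-I)$ and $T^{-2} = (2F-I)(2F-I)^{*}$, so $\|T\| = \|T^{-1}\| = \|2F-I\|$; the similarity $V = T^{1/2}$ then makes $V\pi(g)V^{-1}$ unitary and yields $c(\pi) \leq \|2F-I\|$.

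To evaluate $\|2F-I\|$, parametrise $u = p + K_0^{*}p \in M$ and $v = K_0 q + q \in L_{K_0}$: then $(u,v) = 2(K_0^{*}p, q)$ and $(2F-I)(u+v) = u - v$, so $\|2F-I\|^2 = (1 + \cos\theta)/(1 - \cos\theta)$, where $\theta$ is the $(\cdot,\cdot)$-angle between $M$ and $L_{K_0}$. The identity $(p, K_0K_0^{*}p) = \|K_0^{*}p\|^2$ combined with Cauchy--Schwarz forces equality only at eigenvectors of $K_0K_0^{*}$; choosing $p$ to be a top eigenvector of the finite-rank operator $K_0K_0^{*}$ (eigenvalue $\rho^2$), $q$ anti-parallel to $K_0^{*}p$ with $\|q\| = \|p\|$, direct computation gives $\cos\theta = 2\rho/(1+\rho^2)$ and hence $\|2F-I\| = (1+\rho)/(1-\rho)$. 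Since $\rho \leq r \leq \|\pi\|^2/(\|\pi\|^2+1)$ (an elementary inequality), this yields $c(\pi) \leq (1+\rho)/(1-\rho) \leq 2\|\pi\|^2 + 1$. The step I expect to be most delicate is the quantitative placement of $K_0$ inside the orbit ball: Theorem \ref{orbit} asserts only existence in $\mathfrak{B}$, so pinning down $\|K_0\| \leq r$ requires inspecting the Möbius-isometric structure underlying its proof, or supplying an independent Markov--Kakutani argument on a suitable invariant ball; once this is in place, the angle computation and the algebra surrounding $F$ are essentially routine.
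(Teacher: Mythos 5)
Your proposal is correct in substance and shares the paper's overall architecture --- pass to the group $\Gamma=\{\phi_{\pi(g)}\}$, bound the orbit of $0$ by $r=\sqrt{(\Vert\pi\Vert^{2}-1)/(\Vert\pi\Vert^{2}+1)}$, and extract a common fixed point $K_{0}$ with $\Vert K_{0}\Vert\leq r$ --- but it diverges from the paper in two places. First, you get the orbit bound by direct block-matrix identities ($U_{11}^{*}U_{11}=I+U_{21}^{*}U_{21}$ together with $\Vert Ux\Vert^{2}=\Vert x\Vert^{2}+2\Vert U_{21}x\Vert^{2}$ on $H_{2}$), whereas the paper reduces to M\"{o}bius operators and proves the two-sided Lemma \ref{estimates}; your computation is correct and arguably more economical for this one direction. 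Second, and more substantially, you build the similarity from the $J$-selfadjoint idempotent $F$ onto the invariant dual pair $(L_{K_0}^{[\perp]},L_{K_0})$ and estimate $c(\pi)\leq\Vert 2F-I\Vert$, while the paper conjugates by $V=M_{K_{0}}$ and estimates $\Vert M_{K_{0}}\Vert\Vert M_{-K_{0}}\Vert$ via Lemma \ref{estimates}. The two routes give the same value $(1+\rho)/(1-\rho)$ (indeed $2F-I=M_{K_{0}}JM_{-K_{0}}$, so your bound is a reformulation of the paper's), and your closing inequality $r\leq\Vert\pi\Vert^{2}/(\Vert\pi\Vert^{2}+1)$ is a slightly cleaner way to land on $2\Vert\pi\Vert^{2}+1$ than the paper's $\Vert\pi\Vert^{2}+1+\sqrt{\Vert\pi\Vert^{4}-1}$.

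Two points need tightening. The step you yourself flag --- locating $K_{0}$ in $r\overline{\mathfrak{B}}$ --- is not obtainable from Theorem \ref{orbit} as stated, but it is exactly what the paper's Lemma \ref{fp} supplies: by Lemma \ref{fp-gen} the fixed point lies in every ball-convex set containing the orbit, and $r\overline{\mathfrak{B}}=\{X:\rho(0,X)\leq\tanh^{-1}r\}$ is a metric ball for the hyperbolic distance (\ref{dist0}); citing Lemma \ref{fp} rather than Theorem \ref{orbit} closes this gap, and no independent Markov--Kakutani argument is needed. Also, your evaluation of $\Vert 2F-I\Vert$ exhibits extremal vectors but only sketches the required \emph{upper} bound $\cos\theta\leq2\rho/(1+\rho^{2})$; this does follow, e.g.\ from $|(K_{0}^{*}p,q)|^{2}\leq\Vert K_{0}^{*}p\Vert\,\Vert p\Vert\,\Vert q\Vert\,\Vert K_{0}q\Vert$ and the monotonicity of $s\mapsto 2s/(1+s^{2})$ on $[0,1]$, but it should be said, since only the upper bound on $\Vert 2F-I\Vert$ enters the estimate of $c(\pi)$.
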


The first statement of the theorem was proved in \cite{OShT}; to prove the
inequality (\ref{cepi}) we will repeat some steps of the proof in \cite{OShT}
adding necessary changes and estimations.

We begin with a general result on fixed points of groups of isometries.

Let us say that a metric space $(\mathcal{X},d)$ is \textit{ball-compact} if a
family of balls
\[
E_{a,r}=\{x\in X:d(a,x)\leq r\}
\]
has non-void intersection provided each its finite subfamily has non-void
intersection (see \cite{Tak1}).

A subset $M\subset\mathcal{X}$ is called \textit{ball-convex} if it is the
intersection of a family of balls. The compactness property extends from balls
to ball-convex sets: if $(\mathcal{X},d)$ is ball-compact, then a family
$\{M_{\lambda}:\lambda\in\Lambda\}$ of ball-convex subsets of $\mathcal{X}$
has non-void intersection if each its finite subfamily has non-void intersection.

The \textit{diameter} of a subset $M\subset\mathcal{X}$ is defined by
\begin{equation}
\text{diam}(M)=\sup\{d(x,y):x,y\in M\}.\label{diam}%
\end{equation}
A point $a\in M$ is called \textit{diametral} if
\[
\sup\{d(a,x):x\in M\}=\mathrm{diam}(M).
\]
A metric space $\mathcal{X}$ is said to have \textit{normal structure} if
every ball-convex subset of $\mathcal{X}$ with more than one element has a
non-diametral point.

\begin{lemma}
\label{fp-gen} Suppose that a metric space $(\mathcal{X},d)$ is ball-compact
and has normal structure. If a group $\Gamma$ of isometries of $(\mathcal{X}%
,d)$ has a bounded orbit $O$, then it has a fixed point $x_{0}$.
Moreover\emph{,} $x_{0}$ belongs to the intersection of all ball-convex
subsets containing $O$.
\end{lemma}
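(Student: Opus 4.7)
The plan is a Kirk-type fixed point argument transplanted to the ball-convex setting. First I would let $C_{0}$ be the intersection of all ball-convex subsets of $\mathcal{X}$ containing $O$. Since every $\gamma\in\Gamma$ is a bijective isometry, it sends each ball $E_{a,r}$ to $E_{\gamma a,r}$ and hence sends ball-convex sets to ball-convex sets; combined with $\gamma(O)=O$, this shows that $\gamma$ permutes the defining family, so $\gamma(C_{0})=C_{0}$. Thus $C_{0}$ is a nonempty bounded ball-convex $\Gamma$-invariant set (bounded because $O$, being bounded, sits in some closed ball). Producing a fixed point inside $C_{0}$ will automatically give the "moreover" clause.

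Next I would apply Zorn's lemma to the family $\mathcal{F}$ of all nonempty $\Gamma$-invariant ball-convex subsets of $C_{0}$, ordered by reverse inclusion. For a chain $\{C_{\alpha}\}\subseteq\mathcal{F}$, nestedness gives the finite intersection property, so ball-compactness yields that $\bigcap_{\alpha}C_{\alpha}\neq\emptyset$; this intersection is visibly ball-convex and $\Gamma$-invariant, so it is again a member of $\mathcal{F}$. Zorn's lemma then produces a minimal element $M\in\mathcal{F}$.

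The core step is to show $M$ is a singleton. Suppose for contradiction $M$ has more than one element, and set $r_{0}=\inf_{a\in M}\sup_{x\in M}d(a,x)$; normal structure gives a non-diametral point of $M$, so $r_{0}<\mathrm{diam}(M)$. Define the "Chebyshev centre"
\[
M_{0}=\Bigl\{a\in M : \sup_{x\in M}d(a,x)\le r_{0}\Bigr\}=M\cap\bigcap_{x\in M}E_{x,r_{0}}.
\]
By construction $M_{0}$ is ball-convex; it is $\Gamma$-invariant because $\Gamma$ permutes $M$ by isometries, giving $\sup_{x\in M}d(\gamma a,x)=\sup_{x\in M}d(a,x)$; and $M_{0}\subsetneq M$, since any two points of $M$ at distance strictly greater than $r_{0}$ (which exist as $\mathrm{diam}(M)>r_{0}$) cannot both lie in $M_{0}$. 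So once I know $M_{0}\neq\emptyset$, it sits in $\mathcal{F}$ and properly refines $M$, contradicting minimality; hence $M=\{x_{0}\}$, and $x_{0}\in C_{0}$ is the required common fixed point.

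The main obstacle is the non-emptiness of $M_{0}$: in a general metric space the Chebyshev radius need not be attained. I would dispatch this by invoking ball-compactness a second time. Choose $r_{n}\downarrow r_{0}$ with $r_{n}>r_{0}$; by the very definition of $r_{0}$, for each $n$ there exists $a_{n}\in M$ with $\sup_{x\in M}d(a_{n},x)\le r_{n}$, so the set $M_{n}:=M\cap\bigcap_{x\in M}E_{x,r_{n}}$ is a nonempty ball-convex subset of $\mathcal{X}$. The $M_{n}$ are nested, hence satisfy the finite intersection property, and ball-compactness delivers $M_{0}=\bigcap_{n}M_{n}\neq\emptyset$, completing the argument.
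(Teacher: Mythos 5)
Your proof is correct and follows essentially the same route as the paper's: Zorn's lemma via ball-compactness produces a minimal nonempty $\Gamma$-invariant ball-convex subset, and normal structure forces it to be a singleton. The only cosmetic difference is the shrinking step: the paper takes $\delta=\sup_{x\in M}d(a,x)<\mathrm{diam}(M)$ for a non-diametral point $a$, so that $a\in\bigcap_{b\in M}E_{b,\delta}$ gives nonemptiness immediately, whereas you work with the (possibly unattained) Chebyshev radius $r_{0}$ and therefore need your extra ball-compactness argument with $r_{n}\downarrow r_{0}$ --- both are fine.
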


\begin{proof}
The family $\Phi$ of all balls containing $O$ is non-void. Since $O$ is
invariant under $\Gamma$, the family $\Phi$ is also invariant: $g(E)\in\Phi$,
for each $E\in\Phi$. Hence the intersection $M_{1}$ of all elements of $\Phi$
is a non-void $\Gamma$-invariant ball-convex set; moreover, it follows easily
from the definition that $M_{1}$ is the intersection of all ball-convex
subsets containing $O$.

Thus the family $\mathcal{M}$ of all non-void $\Gamma$-invariant ball-convex
subsets of $M_{1}$ is non-void. Therefore the intersection of a decreasing
chain of sets in $\mathcal{M}$ belongs to $\mathcal{M}$ and, by Zorn Lemma,
$\mathcal{M}$ has minimal elements. Our aim is to prove that any minimal
element $M$ of $\mathcal{M}$ consists of one point.

Assuming the contrary, let $\text{diam}(M)=\alpha>0$. Since $(\mathcal{X},d)$
has normal structure, $M$ contains a non-diametral point $a$. It follows that
$M\subset\{x\in\mathcal{X}:~d(a,x)\leq\delta\}$ for some $\delta<\alpha$. Set
\[
D=\bigcap_{b\in M}E_{b,\delta}.
\]
The set $D$ is non-void because $a\in D$. Furthermore, $D$ is ball-convex by
definition. To see that $D$ is a proper subset of $M$ take $b,c\in M$ with
$d(b,c)>\delta$, then $c\notin E_{b,\delta}$, hence $c\notin D$.

Since $\Gamma$ is a group of isometric transformations and $M$ is invariant
under each element of $\Gamma$, $D$ is $\Gamma$-invariant. We get a
contradiction with the minimality of $M$.

Thus $M=\{x_{0}\}$, for some $x_{0}\in M_{1}$.
\end{proof}

Let now $H_{1}$ and $H_{2}$ be Hilbert spaces, $\dim H_{2}<\infty$. We denote
by $\mathfrak{B}$ the open unit ball of the space $\mathcal{B}(H_{2},H_{1})$
of all linear operators from $H_{2}$ to $H_{1}$.

\medskip

For each $A\in\mathfrak{B}$, we define a transformation $\mu_{A}$ of
$\mathfrak{B}$ (\textit{a M\"{o}bius transformation}) by setting
\begin{equation}
\mu_{A}(X)=(1-AA^{\ast})^{-1/2}(A+X)(1+A^{\ast}X)^{-1}(1-A^{\ast}A)^{1/2}.
\label{mobius}%
\end{equation}
It can be easily checked that $\mu_{A}(0)=A$ and $\mu_{A}^{-1}=\mu_{-A}$, for
each $A\in\mathfrak{B}$.

We set
\begin{equation}
\rho(A,B)=\tanh^{-1}(||\mu_{-A}(B)||). \label{dist0}%
\end{equation}

It was proved in \cite[Theorem 6.1]{OShT} that the space $(\mathfrak{B},\rho)$
is ball-compact and has a normal structure. It can be also verified that
$\rho$ coincides with the Carath\'{e}odory distance $c_{\mathfrak{B}}$ in
$\mathfrak{B}$. Therefore all biholomorphic maps of $\mathfrak{B}$ preserve
$\rho$. Applying Lemma \ref{fp-gen} we get the following statement.

\begin{lemma}
\label{fp} If a group of biholomorphic transformations of $\mathfrak{B}$ has
an orbit contained in the ball $r\overline{\mathfrak{B}}=\{X\in\mathcal{B}%
(H_{2},H_{1}):\Vert X\Vert\leq r\}$, where $r<1$, then it has a fixed point
$K\in r\overline{\mathfrak{B}}$.
\end{lemma}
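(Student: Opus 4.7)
The plan is to apply Lemma \ref{fp-gen} to the metric space $(\mathfrak{B},\rho)$ under the action of the given group $\Gamma$. The two structural prerequisites of that lemma — ball-compactness and normal structure of $(\mathfrak{B},\rho)$ — are exactly what was recorded above from \cite{OShT}, and the identification $\rho=c_{\mathfrak{B}}$ with the Carath\'eodory distance guarantees that every biholomorphic transformation of $\mathfrak{B}$ is a $\rho$-isometry. So $\Gamma$ acts by isometries, and what remains is to check that the orbit is $\rho$-bounded and to extract enough information from Lemma \ref{fp-gen} to place the fixed point inside $r\overline{\mathfrak{B}}$ rather than merely inside $\mathfrak{B}$.

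The key observation I would use for both goals is that the norm-closed ball $r\overline{\mathfrak{B}}$ is itself a $\rho$-ball centered at $0$. Indeed, plugging $A=0$ into
\[
\mu_{A}(X)=(1-AA^{\ast})^{-1/2}(A+X)(1+A^{\ast}X)^{-1}(1-A^{\ast}A)^{1/2}
\]
gives $\mu_{0}=\mathrm{id}$, so $\rho(0,X)=\tanh^{-1}\|X\|$, and therefore
\[
r\overline{\mathfrak{B}}=\{X\in\mathfrak{B}:\rho(0,X)\leq\tanh^{-1}(r)\}=E_{0,\tanh^{-1}(r)}.
\]
Thus $r\overline{\mathfrak{B}}$ is a closed $\rho$-ball, hence ball-convex, and since the hypothesis places the orbit inside $r\overline{\mathfrak{B}}$, the orbit is contained in a single $\rho$-ball and is in particular $\rho$-bounded.

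With this in hand I would simply invoke Lemma \ref{fp-gen}: it supplies a fixed point $K$ of $\Gamma$ belonging to the intersection of all ball-convex subsets of $\mathfrak{B}$ that contain the orbit. Since $r\overline{\mathfrak{B}}=E_{0,\tanh^{-1}(r)}$ is one such subset, this intersection is contained in $r\overline{\mathfrak{B}}$, and we conclude $K\in r\overline{\mathfrak{B}}$. The only step that needs a moment of thought is the translation between the norm topology on $\mathfrak{B}$ (in which the hypothesis is phrased) and the $\rho$-metric (in which Lemma \ref{fp-gen} operates); after the identity $\mu_{0}=\mathrm{id}$ is noted this translation is transparent, so I do not anticipate any genuine obstacle.
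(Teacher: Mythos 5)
Your proposal is correct and follows exactly the paper's route: the paper also deduces Lemma \ref{fp} by applying Lemma \ref{fp-gen} to $(\mathfrak{B},\rho)$, using the ball-compactness and normal structure from \cite{OShT} and the identification of $\rho$ with the Carath\'eodory distance to see that biholomorphic maps act by isometries. Your explicit observation that $\mu_{0}=\mathrm{id}$ gives $\rho(0,X)=\tanh^{-1}\Vert X\Vert$, so that $r\overline{\mathfrak{B}}$ is the ball-convex set $E_{0,\tanh^{-1}(r)}$ and the ``moreover'' clause of Lemma \ref{fp-gen} places the fixed point inside it, is precisely the detail the paper leaves implicit.
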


As we know, biholomorphic transformations of $\mathfrak{B}$ are just
fractional-linear transformations corresponding to $J$-unitary operators in
$H=H_{1}+H_{2}$ with the indefinite scalar product $[x,y]=(P_{1}%
x,y)-(P_{2}x,y)$.

Let us denote by $\mathcal{T}$ the group of all fractional-linear
transformations of $\mathfrak{B}$. Note that $\mathcal{T}$ contains all
M\"{o}bius maps. Indeed it can be easily checked that $\mu_{A}=\phi_{M_{A}}$
where $M_{A}$ is the $J$-unitary operator with the matrix%
\[%
\begin{pmatrix}
(1_{H}-A^{\ast}A)^{-1/2} & A^{\ast}(1_{K}-AA^{\ast})^{-1/2}\\
A(1_{H}-A^{\ast}A)^{-1/2} & (1_{K}-AA^{\ast})^{-1/2}%
\end{pmatrix}
\]
Since $\mu_{A}(0) = A$ we see that $\mathcal{T}$ acts transitively on
$\mathfrak{B}$.

\begin{lemma}
\label{estimates} Let $U$ be a $J$-unitary operator on a $\Pi_{k}$-space $H$,
$\phi_{U}$ the corresponding fractional-linear map and $A = \phi_{U}(0)$. Let
$C = \|U\|$ and $r = \|A\|$. Then
\begin{equation}
C\leq\sqrt{(1+r)(1-r)^{-1}}. \label{direct}%
\end{equation}
and
\begin{equation}
r\leq\sqrt{(C^{2}-1)/(C^{2}+1)}. \label{codirect}%
\end{equation}

\end{lemma}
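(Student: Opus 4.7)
The plan is to factor the $J$-unitary operator $U$ as $U = M_A V$ where $V$ is a Hilbert-space unitary, and then compute $\|M_A\|$ from the singular value decomposition of $A$. The Pontryagin hypothesis $\dim H_2 = k < \infty$ enters only in the step forcing $V$ to be genuinely unitary on $H$.

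Set $V := M_{-A} U$. Both $M_{-A}$ and $U$ are $J$-unitary, so $V$ is $J$-unitary. Since $W \mapsto \phi_W$ is a group homomorphism and $\phi_{M_{-A}} = \mu_{-A}$, we have $\phi_V = \mu_{-A} \circ \phi_U$, so that $\phi_V(0) = \mu_{-A}(A) = 0$ by direct inspection of (\ref{mobius}). The formula $\phi_V(0) = V_{21} V_{11}^{-1}$ then forces $V_{21} = 0$. Now use $J$-unitarity: the $(1,1)$ block of $V^* J V = J$ gives $V_{11}^* V_{11} = I + V_{21}^* V_{21} = I$, and because $\dim H_2 < \infty$, $V_{11}$ is unitary on $H_2$. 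The $(1,2)$ block reads $V_{11}^* V_{12} = V_{21}^* V_{22} = 0$, hence $V_{12} = 0$. The dual relation $V J V^* = J$ then yields $V_{22} V_{22}^* = V_{22}^* V_{22} = I$, so $V_{22}$ is unitary on $H_1$. Therefore $V$ is block-diagonal with unitary blocks and $\|V\| = 1$.

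Consequently $\|U\| = \|M_A V\| = \|M_A\|$. To compute $\|M_A\|$, choose orthonormal singular bases $\{f_i\}_{i=1}^{k} \subset H_2$ and $\{e_i\}_{i=1}^{k} \subset H_1$ satisfying $A f_i = s_i e_i$ and $A^* e_i = s_i f_i$, where $r = s_1 \geq s_2 \geq \cdots \geq 0$. Each two-dimensional subspace $\mathrm{span}(f_i, e_i) \subset H$ is invariant under $M_A$, on which the restriction has matrix $(1 - s_i^2)^{-1/2} \bigl(\begin{smallmatrix} 1 & s_i \\ s_i & 1 \end{smallmatrix}\bigr)$; a short $2 \times 2$ computation (the eigenvalues of $\bigl(\begin{smallmatrix} 1 & s_i \\ s_i & 1 \end{smallmatrix}\bigr)$ are $1 \pm s_i$) gives operator norm $\sqrt{(1+s_i)/(1-s_i)}$. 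On the orthogonal complement of these planes $M_A$ restricts to the identity. Taking the supremum, $\|M_A\| = \sqrt{(1+r)/(1-r)}$.

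Combining the steps, $C = \|U\| = \sqrt{(1+r)/(1-r)}$, which is (\ref{direct}) with equality. Solving for $r$ yields $r = (C^2-1)/(C^2+1)$; since $r \in [0,1)$ we have $r \leq \sqrt{r} = \sqrt{(C^2-1)/(C^2+1)}$, which is (\ref{codirect}). The main obstacle I expect is the second step, where $V_{21} = 0$ together with finite-dimensionality of $H_2$ forces $V$ to be Hilbert-space unitary: the $\Pi_k$ hypothesis enters precisely here, and the entire argument (indeed the conclusion itself) breaks down in a general Krein space where $V_{11}$ could be a proper isometry.
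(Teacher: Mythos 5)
Your proof is correct. The first step --- factoring $U=M_AV$ with $V=M_{-A}U$ a $J$-unitary operator satisfying $\phi_V(0)=0$, showing $V$ is block-diagonal with unitary blocks, and concluding $\|U\|=\|M_A\|$ --- is exactly the paper's reduction; you merely supply the block-matrix details that the paper leaves implicit. Where you genuinely diverge is in computing $\|M_A\|$: the paper expands $\|M_Az\|^2$ for $z=x_1+x_2$ and bounds the resulting quadratic form from above (via $\|S\|=\|T\|=(1+r^2)(1-r^2)^{-1}$ and $\|(1-AA^\ast)^{-1}A\|=r(1-r^2)^{-1}$) to get (\ref{direct}), and from below by restricting $M_A$ to the finite-dimensional summand to get (\ref{codirect}); you instead diagonalize $A$ by its singular value decomposition, so that $M_A$ splits into invariant planes on which it acts as $(1-s_i^2)^{-1/2}\bigl(\begin{smallmatrix}1 & s_i\\ s_i & 1\end{smallmatrix}\bigr)$ plus an identity part. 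Your route buys the exact value $\|M_A\|=\sqrt{(1+r)/(1-r)}$, hence equality in (\ref{direct}) and the sharper identity $r=(C^2-1)/(C^2+1)$ in place of (\ref{codirect}); the price is that it leans on the finite rank of $A$, whereas the paper's quadratic-form estimates would apply verbatim to any strict contraction $A$. One small inaccuracy in your closing remark: forcing $V$ to be unitary does not really hinge on $\dim H_2<\infty$, since combining the $(1,1)$ blocks of both $V^\ast JV=J$ and $VJV^\ast=J$ gives $V_{11}^\ast V_{11}=1$ and $V_{11}V_{11}^\ast=1+V_{12}V_{12}^\ast\geq 1$, so $V_{11}$ is a surjective isometry, hence unitary, in any Krein space; this does not affect the validity of your argument.
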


\begin{proof}
Let $V = M_{A}^{-1}U$, then $\phi_{V}(0) = (\mu_{A})^{-1}(A) = 0$ so the $J$-unitary operator $V$ preserves
subspaces $H_{1}$ and $H_{2}$; it follows that $V$ is a unitary operator on $H$.
Thus $\|U\| = \|M_{A}V\| = \|M_{A}\|$, so it suffices to prove the inequalities (\ref{direct}) and
(\ref{codirect}) for $U = M_A$.

Let, for brevity, $S=(1+A^{\ast}A)(1-A^{\ast}A)^{-1}$ and $T=(1+AA^{\ast
})(1-AA^{\ast})^{-1}$. For any $z=x_{1}+x_{2}\in H_{1}+H_{2}$, a direct
calculation gives
\[
\Vert M_{A}z\Vert^{2}=(Sx_{1},x_{1})+(Tx_{2},x_{2})+4\mathrm{Re\,}%
(((1-AA^{\ast})^{-1}Ax_{1},x_{2}).
\]
Recall that in our notations $\Vert A\Vert=r$, $\Vert M_{A}\Vert=C$. Since
\[
\Vert S\Vert=\Vert T\Vert=(1+r^{2})(1-r^{2})^{-1}%
\]
and
\begin{align*}
\Vert(1-AA^{\ast})^{-1}A\Vert &  =\Vert(1-AA^{\ast})^{-1}AA^{\ast}(1-AA^{\ast}%
)^{-1}\Vert^{1/2}\\
&  =r(1-r^{2})^{-1},
\end{align*}
we get
\begin{align*}
\Vert M_{A}z\Vert^{2}  &  \leq(1+r^{2})(1-r^{2})^{-1}(\Vert x_{1}\Vert
^{2}+\Vert x_{2}\Vert^{2})+4r(1-r^{2})^{-1}\Vert x_{1}\Vert\Vert x_{2}\Vert\\
&  \leq(1+r^{2})(1-r^{2})^{-1}(\Vert x_{1}\Vert^{2}+\Vert x_{2}\Vert
^{2})+2r(1-r^{2})^{-1}(\Vert x_{1}\Vert^{2}+\Vert x_{2}\Vert^{2})\\
&  =(1+r)(1-r)^{-1}\Vert z\Vert^{2},
\end{align*}
which proves (\ref{direct}).

On the other hand, for $x\in H_{2}$, we have
\begin{align*}
\Vert M_{A}x\Vert^{2}  &  =(AA^{\ast}(1-AA^{\ast})^{-1}x,x)+((1-AA^{\ast
})^{-1}x,x)\\
&  =\left\Vert \sqrt{T}x\right\Vert ^{2}%
\end{align*}
whence
\[
\sqrt{(1+r^{2})/(1-r^{2})}=\left\Vert \sqrt{T}\right\Vert \leq\Vert M_{A}%
\Vert=C.
\]
This shows that the inequality (\ref{codirect}) holds.

\end{proof}

\begin{proof}
[The proof of (\ref{cepi}) in Theorem \ref{ort}] Now recall that by the
assumptions of theorem we have a bounded group $\{\pi(g):g\in G\}$ of
operators on a Hilbert space $H$ preserving the form $\Phi$ given by
(\ref{form}). Introducing the indefinite scalar product $[x,y]=(P_{1}%
x,y)-(P_{2}x,y)$ on $H,$ we convert $H$ into a $\Pi_{k}$-space:
\[
H=H_{1}+H_{2},\text{ where }H_{i}=P_{i}H.
\]
Since $\Phi(x)=[x,x],$ all operators $\pi(g)$ are $J$-unitary. Let
$\Gamma=\{\phi_{\pi(g)}:g\in G\}$, the corresponding group of
fractional-linear transformations of the open unit ball $\mathfrak{B}$ of
$\mathcal{B}(H_{2},H_{1})$, and consider the $\Gamma$-orbit $O$ of the
point $0\in\mathfrak{B}$.

For $g\in G$, the inequality $\|\pi(g)\|\le \|\pi\|$, Lemma \ref{estimates} and monotonicity of the function $t\mapsto \sqrt{(t^{2}-1)/(t^{2}+1)}$ imply that
 $$\Vert\phi_{\pi(g)}(0)\Vert\leq
R:=\sqrt{(\|\pi\|^{2}-1)/(\|\pi\|^{2}+1)},$$
so $O\subset R\overline{\mathfrak{B}}$.
By Lemma \ref{fp}, there is an operator $K\in R\overline{\mathfrak{B}}$ such
that $\phi_{\pi(g)}(K)=K$, for all $g\in G$.

Let $V=M_{K}$ and $U(g)=V\pi(g)V^{-1}$ for each $g\in G$. Then $U(g)$ is
$J$-unitary and
\[
\phi_{U(g)}(0)=\mu_{K}\circ\phi_{\pi(g)}\mu_{-K}(0)=\mu_{K}\left(  \phi
_{\pi(g)}(K)\right)  =\mu_{K}(K)=0.
\]
Therefore $U(g)$ preserves $H_{1}$ and $H_{2}$. Since $(x,y)=[x_{1}%
,y_{1}]-[x_{2},y_{2}]$, where $x_{i}=P_{i}x\in H_{i}$, $y_{i}=P_{i}y\in H_{i}%
$, $i=1,2$, we see that
\begin{align*}
(U(g)x,U(g)y) &  =[U(g)x_{1},U(g)y_{1}]-[U(g)x_{2},U(g)y_{2}]=[x_{1}%
,y_{1}]-[x_{2},y_{2}]\\
&  =(x,y),
\end{align*}
for all $x,y\in H$. Thus $U(g)$ is a unitary operator in $H$. We proved that
$\pi$ is similar to a unitary representation\textbf{;} moreover, by Lemma
\ref{estimates},
\begin{align*}
c(\pi) &  \leq\Vert V\Vert\Vert V^{-1}\Vert=\Vert M_{K}\Vert\Vert M_{-K}%
\Vert\leq\sqrt{(1+R)(1-R)^{-1}}^{2}\\
&  =(1+R)(1-R)^{-1}.
\end{align*}
Since $R=\sqrt{(\Vert\pi\Vert^{2}-1)/(\Vert\pi\Vert^{2}+1)}$, we get that
\[
c(\pi)\leq\Vert\pi\Vert^{2}+1+\sqrt{\Vert\pi\Vert^{4}-1}<2\Vert\pi\Vert^{2}+1,
\]
which completes the proof.
\end{proof}

The fact that our estimate of the similarity degree does not depend on the
number of negative squares leads to the conjecture that the result extends to
representations preserving forms with infinite number of negative squares. We
shall see now that this is not true.

It is known (see \cite{Pisier}) that for some groups there exist bounded
representations which are not similar to unitary ones (there is a conjecture
that all non-amenable groups have such representations). Let $\pi$ be such a
representation of a group $G$ on a Hilbert space $H$. We define a
representation $\tau$ of $G$ on $\mathcal{H}=H\oplus H$ by setting%
\[
\tau(g)=%
\begin{pmatrix}
\pi(g) & 0\\
0 & \pi(g^{-1})^{\ast}%
\end{pmatrix}
.
\]
Clearly $\tau$ is bounded. Moreover, it is not similar to a unitary
representation because otherwise $\pi$, being its restriction to an invariant
subspace, would be similar to a restriction of a unitary representation, which
is again unitary.

The space $\mathcal{H}$ is a Krein space with respect to the inner product
$[x_{1}\oplus y_{1},x_{2}\oplus y_{2}]=(x_{1},y_{2})+(y_{1},x_{2})$. Indeed,
$\mathcal{H}=\mathcal{H}_{+}+\mathcal{H}_{-}$, where the subspaces $\mathcal{H}_{+}=\{x\oplus
x:x\in H\}$ and $\mathcal{H}_{-}=\{x\oplus(-x):x\in H\}$ are respectively
positive and negative. It remains to check that the form $\Phi(x\oplus
y)=[x\oplus y,x\oplus y]$ is preserved by operators $\tau(g)$:
\begin{align*}
\lbrack\tau(g)(x\oplus y),\tau(g)(x\oplus y)]  &  =(\pi(g)x,\pi(g^{-1})^{\ast
}y)+(\pi(g^{-1})^{\ast}y,\pi(g)x)\\
&  =(x,y)+(y,x)=[x\oplus y,x\oplus y].
\end{align*}

\section{Quasi-positive definite functions}

Recall that a function $\phi$ on a group $G$ is \textit{positive definite}
(PD, for brevity) if $\phi(g^{-1})=\overline{\phi(g)}$, for $g\in G$, and the
matrices $A_{n}=(\phi(g_{i}^{-1}g_{j}))_{i,j=1}^{n})$ have no negative
eigenvalues, for all $n\in\mathbb{N}$ and all $n$-tuples $g_{1},...,g_{n}\in
G$. In other words, the quadratic forms $\sum_{i,j=1}^{n}\phi(g_{i}^{-1}%
g_{j})z_{i}\overline{z_{j}}$ are positive for all $n\in\mathbb{N}$ . A famous
theorem of Bochner \cite{Boch} states that all such functions can be described as
matrix elements of unitary representations:
\[
\phi(g)=(\pi(g)x,x),
\]
where $\pi$ is a unitary representation of $G$ in a Hilbert space $H$ and
$x\in H$.

We say that $\phi$ is PD \textit{of finite type} if the corresponding
representation is finite-dim\-en\-si\-onal.
It could be proved that $\phi$ is PD of finite type if and only if it
satisfies the condition
\[
\phi(g^{-1}h)=\sum_{i=1}^{m}a_{i}(g)\overline{a_{i}(h)}\text{ for all }g,h\in
G,
\]
where $a_{i}$ are some functions on $G$. For example, the function $\cos x$ is
PD of finite type on $\mathbb{R}$.

A function $\phi$ on a group $G$ is called \textit{quasi-positive definite}
(QPD hereafter) if $\phi(g^{-1})=\overline{\phi(g)}$, for $g\in G$, and there
is $k\in\mathbb{N}$ such that, for any $n\in\mathbb{N}$ and any $n$-tuple
$g_{1},...,g_{n}\in G$, the matrix $(\phi(g_{i}^{-1}g_{j}))_{i,j=1}^{n})$ has
at most $k$ negative eigenvalues. In other words, the quadratic form
$\sum_{i,j=1}^{n}\phi(g_{i}^{-1}g_{j})z_{i}\overline{z_{j}}$ should have at
most $k$ negative squares.

The study of QPD functions was initiated by M. G. Krein \cite{Kr2} motivated
by applications to probability theory --- in particular, to infinite divisible
distributions and, more generally, to stochastic processes with stationary
increments. Other applications of theory of QPD functions are related to
moment problems, Toeplitz forms and other topics of functional analysis, see
\cite{Sasv, Sakai} and references therein.

It is easy to see that the difference $a(g)-b(g)$ of two PD functions is a QPD
function if $b$ is of finite type. Clearly such QPD functions are bounded. The
following theorem shows that all bounded QPD functions are of this type.

\begin{theorem}
\label{funct} Every bounded QPD function $\phi$ can be written in the form
\[
\phi(g)=\phi_{1}(g)-\phi_{2}(g),
\]
where $\phi_{1}$ is a PD function and $\phi_{2}$ is a PD function of finite type.
\end{theorem}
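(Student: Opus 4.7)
The plan is to realize $\phi$ as a diagonal matrix coefficient of a $J$-unitary representation of $G$ on a Pontryagin space, use boundedness of $\phi$ to show that the associated group of fractional-linear transformations of $\mathfrak{B}$ has an orbit separated from the boundary, invoke Lemma~\ref{fp} to obtain a $G$-invariant uniformly negative $k$-dimensional subspace, and split the cyclic vector $J$-orthogonally to read off the required decomposition.

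First I would perform the $J$-GNS construction: equip $\mathbb{C}[G]$ with the Hermitian form $[\delta_g,\delta_h]=\phi(h^{-1}g)$, quotient by its isotropic kernel, and complete, producing a Pontryagin space $\mathcal{H}=\Pi_k$ (where $k$ denotes the number of negative squares of $\phi$, finite by hypothesis) with reproducing kernel sections $K(\cdot,g)$ (the classes of $\delta_g$), cyclic vector $\xi:=K(\cdot,e)$, and a $J$-unitary left-translation representation $\pi(a):K(\cdot,g)\mapsto K(\cdot,ag)$ satisfying $\phi(g)=[\pi(g)\xi,\xi]$.

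The core technical step is the following orbit-separation estimate. Choose $g_1,\dots,g_k\in G$ for which $M=[\phi(g_j^{-1}g_i)]_{i,j=1}^{k}$ is strictly negative definite (such $g_i$ exist by definition of $k$) and take $H_-=\operatorname{span}\{K(\cdot,g_i)\}$, $H_+=H_-^{[\perp]}$ as a reference decomposition of $\mathcal{H}$. The $J$-orthogonal projection of $K(\cdot,x)$ onto $H_-$ is $K_-(\cdot,x)=\sum_i c_i(x)K(\cdot,g_i)$, where $Mc(x)=f(x)$ with $f(x)_j:=\phi(g_j^{-1}x)$, and a direct computation gives
\[
\Vert K_-(\cdot,x)\Vert^2=-f(x)^{\ast}M^{-1}f(x),
\]
which is uniformly bounded in $x$ because $|\phi|\leq\Vert\phi\Vert_\infty$ bounds $\Vert f(x)\Vert$. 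Combined with $\Vert K(\cdot,x)\Vert^2=\phi(e)+2\Vert K_-(\cdot,x)\Vert^2$ this yields a uniform bound $\Vert K(\cdot,x)\Vert\leq C_0$. Since $\pi(a)K(\cdot,g_i)=K(\cdot,ag_i)$ and $H_-$ is $k$-dimensional (with Gram matrix $-M$ in the positive inner product), one gets a uniform bound $\Vert\pi(a)|_{H_-}\Vert\leq C_1$. The calculation inside the proof of Lemma~\ref{estimates} yields the identity $\Vert\pi(a)|_{H_-}\Vert^2=(1+\Vert W_a\Vert^2)(1-\Vert W_a\Vert^2)^{-1}$ with $W_a:=\phi_{\pi(a)}(0)\in\overline{\mathfrak{B}}$, hence $\Vert W_a\Vert\leq R:=\sqrt{(C_1^2-1)/(C_1^2+1)}<1$ uniformly in $a\in G$.

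Lemma~\ref{fp} applied to $\Gamma:=\{\phi_{\pi(a)}:a\in G\}$ then furnishes a common fixed point $K_0\in R\overline{\mathfrak{B}}$, and the graph $L:=L_{K_0}$ is a $G$-invariant, maximal, uniformly negative, $k$-dimensional subspace. Consequently $\mathcal{H}=L^{[\perp]}+L$ is a $J$-orthogonal, $G$-invariant decomposition in which $L^{[\perp]}$ is a Hilbert space with inner product $[\cdot,\cdot]$ and $L$ is a $k$-dimensional Hilbert space with inner product $-[\cdot,\cdot]$, and $\pi$ acts unitarily on both summands. Writing $\xi=\xi_++\xi_-$ accordingly, $J$-orthogonality kills the cross terms and
\[
\phi(g)=[\pi(g)\xi_+,\xi_+]+[\pi(g)\xi_-,\xi_-]=\phi_1(g)-\phi_2(g),
\]
with $\phi_1(g):=[\pi(g)\xi_+,\xi_+]$ a positive definite function and $\phi_2(g):=-[\pi(g)\xi_-,\xi_-]$ a positive definite function of finite type, realized as a matrix coefficient of the $k$-dimensional unitary representation $\pi|_L$. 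The main difficulty is the orbit-separation estimate above: upgrading the pointwise bound $|\phi|\leq\Vert\phi\Vert_\infty$ to the geometric bound $\sup_a\Vert W_a\Vert<1$ required by Lemma~\ref{fp}; choosing $H_-$ to be spanned by reproducing kernels reduces this to a finite-dimensional matrix computation that uses only the sup-norm bound on $\phi$.
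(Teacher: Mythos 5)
Your overall strategy is the paper's: the $J$-GNS construction realizing $\phi(g)=[\pi(g)\xi,\xi]$ on a $\Pi_k$-space, a fixed point of the induced group of fractional-linear maps (the paper invokes Corollary \ref{bounded}, you invoke Lemma \ref{fp} directly) to get an invariant dual pair, and the $J$-orthogonal splitting of the cyclic vector. The one place you genuinely diverge is the boundedness step: the paper simply quotes \cite[Theorem 3.2]{Sakai} for the boundedness of the representation, whereas you try to prove the orbit-separation estimate by hand --- and that is exactly where your argument has a genuine error. You claim that one can choose $g_1,\dots,g_k\in G$ with $M=(\phi(g_j^{-1}g_i))_{i,j=1}^{k}$ strictly negative definite, ``by definition of $k$''. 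This is false in general: every diagonal entry of $M$ equals $\phi(g_i^{-1}g_i)=\phi(e)$, so $M$ cannot be negative definite unless $\phi(e)<0$, and $\phi(e)>0$ is the typical case (e.g.\ for $\phi=\phi_1-\phi_2$ with $\phi_1(e)$ large). The definition of ``$k$ negative squares'' only yields a finite set $g_1,\dots,g_n$ whose Gram matrix has $k$ negative eigenvalues, hence $k$ \emph{finitely supported functions} $f_1,\dots,f_k$ (linear combinations of the $\varepsilon_g$, not point masses) spanning a negative subspace.

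The error is local and repairable: take $H_-=\operatorname{span}\{f_1,\dots,f_k\}$ with Gram matrix $M=([f_i,f_j])$ negative definite. The coefficients of the $J$-orthogonal projection of $K(\cdot,x)$ onto $H_-$ are then determined by $[K(\cdot,x),f_j]=\sum_{h}\overline{f_j(h)}\,\phi(h^{-1}x)$, which is bounded by $\Vert f_j\Vert_1\Vert\phi\Vert_\infty$ uniformly in $x$, and your identities $\Vert K_-(\cdot,x)\Vert^2=-f(x)^{\ast}M^{-1}f(x)$ and $\Vert K(\cdot,x)\Vert^2=\phi(e)+2\Vert K_-(\cdot,x)\Vert^2$ survive verbatim; moreover $\pi(a)f_i$ is again a finitely supported function supported on $a\cdot\operatorname{supp}(f_i)$, so the uniform bound on $\Vert K(\cdot,x)\Vert$ still gives $\Vert\pi(a)|_{H_-}\Vert\le C_1$. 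With that repair, your use of the computation from Lemma \ref{estimates} to convert $\Vert\pi(a)|_{H_-}\Vert\le C_1$ into $\Vert\phi_{\pi(a)}(0)\Vert\le R<1$ is correct, and as a bonus (via the inequality (\ref{direct})) it also yields the boundedness of the operators $U(g)$ on all of $H$, a point the paper leaves to the reader and to \cite{Sakai}. So: same route as the paper, one false existence claim in the self-contained substitute for the citation, fixable by replacing point masses with finitely supported functions.
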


\begin{proof}
There is a standard way to associate with $\phi$ a $J$-unitary representation
of $G$ on a $\Pi_{k}$-space. Let $W$ be the linear space of all finitely
supported functions on $G$; we define an indefinite scalar product
$[\cdot,\cdot]$ on $W$ by setting
\begin{equation}
\lbrack f_{1},f_{2}]=\sum_{g,h\in G}f_{1}(g)\overline{f_{2}(h)}\phi
(g^{-1}h).\label{prod}%
\end{equation}
For each $g\in G$ we define an operator $T_{g}$ on $W$ by setting
$T_{g}f(h)=f(g^{-1}h)$. It is easy to check that the operators $T_{g}$
preserve $[\cdot,\cdot]$, that is, $[T_{g}f_{1},T_{g}f_{2}]=[f_{1},f_{2}]$,
for all $f_{1},f_{2}$. Clearly, the map $g\mapsto T_{g}$ is a representation
of $G$ on $W$.

Defining by $\varepsilon_{g}$, for $g\in G$, the function on $G$ equal 1 at
$g$ and 0 at other elements, we see that the matrix $(\phi(g_{i}^{-1}%
g_{j}))_{i,j=1}^{n}$ is the Gram matrix for the family $\varepsilon_{g_{1}%
},...,\varepsilon_{g_{n}}$. Since the linear span of vectors $\varepsilon_{g}$
coincides with $W$, the condition "$\phi$ is QPD\;"\; implies that the dimension of any
negative subspace of $W$ does not exceed $k$. It follows that $W=W_{1}+H_{-},$
where $W_{1}$ is a positive subspace, $H_{-}$ is negative and $\dim H_{-}=k$.
Denoting by $H_{+}$ the completion of $W_{1}$ with respect to the scalar
product $[\cdot,\cdot]|_{W_{1}}$, we get a $\Pi_{k}$-space $H=H_{+}+H_{-}$. It
is not difficult to show that operators $T_{g}$ extend to bounded $J$-unitary
operators $U(g)$ on $H$. It follows easily from the definition that
$\phi(g)=[U(g)f,f]$, where $f$ is the image of $\varepsilon_{e}$ in $H$.

Since $\phi$ is bounded, the representation $U$ is bounded (see for example
\cite[Theorem 3.2]{Sakai}). By Corollary \ref{bounded}, there is a decomposition
$H=K_{+}+K_{-}$ where $K_{+}$ is positive, $K_{-}$ is negative, and both
subspaces are invariant for operators $U(g)$. In other words, the operators $U(g)$
commute with the projection $P$ on $K_{+}$. Setting $f_{+}=Pf$,
$f_{-}=(1-P)f$, we get
\begin{align*}
\phi(g) &  =[U(g)f,f]=[U(g)f_{+},f_{+}]+[U(g)f_{-},f_{-}]=(U(g)f_{+}%
,f_{+})-(U(g)f_{-},f_{-})\\
&  =\phi_{1}(g)-\phi_{2}(g),
\end{align*}
which is what we need because the functions $\phi_{1}$ and $\phi_{2}$ are PD,
and $\phi_{2}$ is of finite type.
\end{proof}

For amenable groups the result was proved by K. Sakai \cite{Sakai}.

\section{$J$-symmetric algebras and Burnside-type theorems}

As in linear algebra, after proving the existence of a nontrivial invariant
subspace (IS, for brevity) for a single operator, one looks for conditions
under which a family of operators has a common IS. Since the lattice
$\mathrm{Lat}(E)$ of invariant subspaces of a family $E\subset\mathcal{B}(H)$
coincides with $\mathrm{Lat}(\mathcal{A}(E))$, where $\mathcal{A}(E)$ is the
algebra generated by $E$, it is reasonable to restrict ourself by study of
non-positive invariant subspaces for algebras (more precisely, for
$J$-symmetric operator algebras in a $\Pi_{k}$-space $H$). Thus one may
rewrite the Naimark's Theorem in the form: all commutative $J$-symmetric
algebras in $H$ have invariant MNPS. What else?

For algebras of operators in a finite-dimensional space, the problem of
existence of invariant subspaces was completely solved by W. Burnside
\cite{Burn}: the only algebra that has no IS is the algebra of all operators.
For infinite-dimensional Hilbert spaces, the problem is unsolved: it is
unknown if there exists an algebra $A\subset\mathcal{B}(H)$ which has no
(closed) IS and is not WOT-dense in $\mathcal{B}(H)$. In presence of compact
operators the answer was given by Victor Lomonosov \cite{Lom1}: if an algebra
$A$ contains at least one non-zero compact operator, then either $A$ has an
invariant subspace or it is WOT-dense in $\mathcal{B}(H)$. In fact, he proved
much more: if an algebra $A$ contains a non-zero compact operator and has no
invariant subspaces then the \textit{norm}-closure of $A$ contains the algebra
$\mathcal{K}(H)$ of all compact operators. These results were further extended
in Lomonosov's work \cite{Lom2}.

For *-algebras of operators, von Neumann's \textit{double commutant Theorem}
\cite{Dix} immediately implies a Burnside-type result: a *-algebra of
operators has an invariant subspace if and only if it is not WOT-dense in
$\mathcal{B}(H)$.

Since Theorem \ref{PK} establishes that a $J$-symmetric operator has an
invariant subspace, it leads to the traditional Burnside-type problem for
$J$-symmetric algebras: which $J$-symmetric algebras of operators on a space
of $\Pi_{k}$-type have no invariant subspaces?

The first answer was given by R. S. Ismagilov \cite{Ism}: a $J$-symmetric
WOT-closed algebra $A$ in a $\Pi_{k}$-space either has an invariant subspace
or coincides with $\mathcal{B}(H)$ (this work presents also another proof of
Pontryagin's Theorem, which is short but based on a deep result of J. Schwartz
\cite{Schw} about invariant subspaces of finite-rank perturbations of
selfadjoint operators). Furthermore A. I. Loginov and V. S. Shulman \cite{LSh}
(see \cite{KLSh} and \cite{KSh} for a more transparent presentation) proved
the corresponding result for \textit{norm-closed} $J$-symmetric algebras in
$\Pi_{k}$-spaces was obtained: a $J$-symmetric algebra $A\subset
\mathcal{B}(H)$ has no invariant subspaces if and only if its norm-closure
contains the algebra $\mathcal{K}(H)$. The proof is quite complicated and uses
the striking theorem of J. Cuntz \cite{Cu} about $C^{\ast}$-equivalent Banach *-algebras.

The following Burnside-type result is more closely related to the
Pontryagin-Krein Theorem: it describes $J$-symmetric algebras that have no
\textit{non-positive} invariant subspaces. To formulate it let us consider a
Hilbert space $E$ and the direct sum $H=\oplus_{i=1}^{n}E_{i}$ of $n\leq
\infty$ copies of $E$. Let $\mathcal{B}(E)^{(n)}$ be the algebra of all
operators on $H$ of the form $T\oplus T\oplus...$, where $T\in\mathcal{B}(E)$.
On each summand $E_{i}=E$ in $H$ we choose a projection $P_{i}$ with
$0\leq\dim P_{i}E=k_{i}<\dim E$, assuming that $\sum_{i}k_{i}=k<\infty$, and
set $P=P_{1}\oplus P_{2}\oplus...$, $J=1-2P$. Then $H$ is a $\Pi_{k}$-space
with respect to the inner product $[x,y]=(Jx,y)$. The algebra $\mathcal{B}%
(E)^{(n)}$ is clearly $J$-symmetric; $J$-symmetric algebras of this form are
called \textit{model algebras}.

\begin{theorem}
\label{bur} A WOT closed $J$-symmetric algebra $A$ on a $\Pi_{k}$-space $H$
does not have non-positive invariant subspaces if and only if it is a direct
$J$-orthogonal sum of a $W^{\ast}$-algebra on a Hilbert space and a finite
number of model algebras.
\end{theorem}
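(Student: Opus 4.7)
The plan is to prove both directions by combining the structure of $A$-invariant subspaces with the $J$-orthogonal geometry of $\Pi_{k}$.

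For the ``if'' direction, I would verify directly that the proposed algebras have no non-positive invariant subspaces. A $W^{*}$-algebra on a Hilbert (positive) space has none trivially, since no non-zero non-positive vectors exist. For a model algebra $\mathcal{B}(E)^{(n)}$ the commutant in $\mathcal{B}(E^{(n)})$ equals $1_{E}\otimes M_{n}$, so its closed invariant subspaces have the form $E\otimes V$ with $V\subseteq\mathbb{C}^{n}$. For any non-zero $v\in V$ and any unit vector $y\in\bigcap_{i}\ker P_{i}$ (which exists because $\sum_{i}k_{i}=k<\dim E$), one computes $[y\otimes v,y\otimes v]=\Vert v\Vert^{2}>0$, exhibiting a positive vector. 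Invariant subspaces of a $J$-orthogonal direct sum split across summands, since the summand identities are central projections in the algebra, so the property persists.

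For the ``only if'' direction, the pivotal observation is that every $A$-invariant closed subspace $L$ is $J$-regular, i.e., $L\cap L^{[\perp]}=0$: the $J$-symmetry of $A$ makes $L^{[\perp]}$ invariant, hence $L\cap L^{[\perp]}$ is an $A$-invariant neutral (therefore non-positive) subspace and must vanish by hypothesis. In the $\Pi_{k}$-setting this yields $H=L\oplus L^{[\perp]}$ $J$-orthogonally with both summands invariant. Applying Zorn's lemma to the family of non-negative $A$-invariant subspaces (inductive, since non-negativity persists under closed unions of chains) produces a maximal element $H_{+}^{A}$; its isotropic part is a non-positive invariant subspace and therefore vanishes, so $H_{+}^{A}$ is positive. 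On $H_{+}^{A}$ the form is positive definite and the $J$-adjoint equals the Hilbert adjoint, so $A|_{H_{+}^{A}}$ is a $W^{*}$-algebra. On $H_{\mathrm{ess}}:=(H_{+}^{A})^{[\perp]}$ maximality excludes non-negative invariant subspaces. I would then iteratively extract minimal $A$-invariant subspaces $H_{1},\ldots,H_{m}$ with $H_{\mathrm{ess}}=H_{1}\oplus\cdots\oplus H_{m}$; the decomposition is finite because each $H_{j}$ contains a negative vector and $\sum_{j}\dim((H_{j})_{-})\le k$. On each irreducible piece, Ismagilov's theorem \cite{Ism} gives that $A|_{H_{j}}$ is WOT-dense in $\mathcal{B}(H_{j})$.

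The closing step is to show that $A$ itself equals the $J$-orthogonal direct sum of $A|_{H_{+}^{A}}$ and of model algebras obtained by grouping the $H_{j}$'s according to their $A$-module isomorphism type: an isotypic cluster of $n_{l}$ mutually isomorphic pieces is $J$-orthogonally isomorphic to $E_{l}^{(n_{l})}$, on which $A$ acts as the amplification $\mathcal{B}(E_{l})^{(n_{l})}$. This uses the $W^{*}$-structure of the commutant $A'$ (which is $J$-symmetric, owing to the regularity of every invariant) and a Jacobson-density argument. The main obstacle is exactly this isotypic identification: one must rule out non-diagonal couplings across pieces of different types and verify that inside an isotypic cluster the coupling is precisely the model-algebra amplification rather than a more general subdirect product of copies of $\mathcal{B}(E_{l})$. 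This is the technical core of the argument and leans essentially on Ismagilov's irreducibility classification combined with the regularity established at the outset.
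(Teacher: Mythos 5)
Your outline has the right overall shape (and, for context, the paper itself does not prove Theorem \ref{bur} directly but derives it from \cite[Theorem 13.7]{KSh}), and several of its steps are sound: the regularity of every invariant subspace, the splitting $H=H_{+}^{A}\oplus H_{\mathrm{ess}}$ via a maximal non-negative invariant subspace, and the bound $m\le k$ on the number of minimal pieces. But the proof is not complete, and the part you yourself label ``the technical core'' is exactly where the content of the theorem sits. After the decomposition $H=H_{+}^{A}\oplus H_{1}\oplus\cdots\oplus H_{m}$ you only know that $A$ embeds into the product of its restrictions; the theorem asserts equality with a $J$-orthogonal direct sum, so you must classify the possible couplings. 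The computation that does this is missing: if $H_{i}$ and $H_{j}$ carry equivalent irreducible restrictions intertwined by an invertible $V$, then $J$-symmetry of $\{T\oplus VTV^{-1}\}$ forces $V^{\ast}J_{j}V=cJ_{i}$ with $c\in\mathbb{R}\setminus\{0\}$ (so the signatures of $J_{i}$ and $cJ_{j}$ coincide), and it is this constraint that kills couplings between the positive part and the indefinite part and between non-isomorphic pieces, and that identifies an isotypic cluster with a model algebra. Without it you have only a subdirect product. Two further steps are asserted but not justified: the existence of minimal invariant subspaces in $H_{\mathrm{ess}}$ (Zorn needs that a decreasing chain of nonzero invariant subspaces has nonzero intersection, which requires a compactness argument using $\dim H_{-}=k<\infty$), and the passage from Ismagilov's WOT-density of $A|_{H_{j}}$ in $\mathcal{B}(H_{j})$ to equality, since a restriction of a WOT-closed algebra to an invariant subspace need not be WOT-closed; also the commutant $A'$ is only $J$-symmetric, not selfadjoint, so invoking its ``$W^{\ast}$-structure'' is not legitimate as stated.

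A separate problem occurs in your ``if'' direction: you justify the existence of a unit vector in $\bigcap_{i}\ker P_{i}$ by writing $\sum_{i}k_{i}=k<\dim E$, but the paper's definition of a model algebra only requires $k<\infty$ and each $k_{i}<\dim E$. Some such restriction is in fact indispensable: take $E=\mathbb{C}^{2}$, $n=2$, $P_{1}+P_{2}=1$; then $\{T\oplus T\}$ is $J$-symmetric (since $J_{2}=-J_{1}$) and the diagonal $\{x\oplus x:x\in E\}$ is a nonzero neutral invariant subspace, so this ``model algebra'' fails the conclusion. Thus your verification proves the ``if'' direction only for a corrected class of model algebras, and any complete argument has to make that correction explicit rather than assume it silently.
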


The proof can be easily deduced from \cite[Theorem 13.7]{KSh} that gives a
description of all algebras that have no neutral invariant subspaces. To
describe \textit{norm-closed} $J$-symmetric algebras without non-positive
invariant subspaces one should replace in Theorem \ref{bur} a W*-algebra by a
C*-algebra and model algebras $\mathcal{B}(E)^{(n)}$ by the algebras $A^{(n)}%
$, where $A\subset\mathcal{B}(E)$ is a C*-algebra containing $\mathcal{K}%
(E)$.\bigskip

Another natural version of the problem is to describe Banach *-algebras with
the property that all their $J$-symmetric representations in a $\Pi_{k}$-space
have MNPS. It is shown in \cite[Theorem 19.4]{KSh} that this property is
equivalent to the absence of irreducible $\Pi_{k}$-representations; let us
denote by ($\mathcal{K}$) the class of all Banach *-algebras that possess it.

It follows from Naimark's Theorem that ($\mathcal{K}$) contains all
commutative algebras. On the other hand Theorem \ref{ort} implies that any
Banach algebra, generated by a bounded subgroup of unitary elements belongs to
($\mathcal{K}$). This implies that ($\mathcal{K}$) contains all C*-algebras
(this was proved earlier in \cite{Sh1}).

Recall that a Banach *-algebra $A$ is \textit{Hermitian} if all its
selfadjoint elements have real spectra. Let us say that $A$ is \textit{almost
Hermitian} if the elements with real spectra are dense in the space of all
selfadjoint elements. It is proved in \cite[Corollary 20.6]{KSh} that all
almost Hermitian algebras belong to ($\mathcal{K}$); this result has
applications to the study of unbounded derivations of $C^{\ast}$-algebras (see
\cite{KSh}).

It is known that the group algebras $L^{1}(G)$ of locally compact groups are
not Hermitian for some $G$ (the Referee kindly informed us about a recent result of Samei and Wiersma \cite{SW} which states that $L^{1}(G)$ is not Hermitian if $G$ is not amenable). It is not known if all algebras $L^{1}(G)$ are almost
Hermitian. Nevertheless all $L^{1}(G)$ belong to ($\mathcal{K}$); moreover, the
following result holds.

\begin{theorem}
\label{grA} If $G$ is a locally compact group then any $J$-symmetric
representation of $L^{1}(G)$ on a $\Pi_{k}$-space $H$ has invariant dual pair
of subspaces.
\end{theorem}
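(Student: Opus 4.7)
The plan is to integrate the $J$-symmetric representation $\pi$ to a uniformly bounded group of $J$-unitary operators on $H$, and then apply Corollary \ref{bounded}. The starting point is the standard fact that a non-degenerate continuous $*$-representation $\pi$ of $L^{1}(G)$ determines a unique strongly continuous representation $U$ of $G$ on the essential subspace $H_{0}=\overline{\pi(L^{1}(G))H}$, characterized by $U(g)\pi(f)=\pi(L_{g}f)$ with $(L_{g}f)(h)=f(g^{-1}h)$, and satisfying $\pi(f)v=\int_{G}f(g)U(g)v\,dg$.

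First I would check that each $U(g)$ is $J$-unitary and that $U$ is uniformly bounded. The $J$-symmetry condition $\pi(f^{*})=\pi(f)^{\sharp}$, combined with the change of variable $g\mapsto g^{-1}$ (in which the modular function absorbs the Jacobian), should yield $[U(g)v,w]=[v,U(g^{-1})w]$ for all $v,w\in H_{0}$, so each $U(g)$ is $J$-unitary. For uniform boundedness I would pick a bounded approximate identity $\{e_{\lambda}\}\subset L^{1}(G)$ with $\Vert e_{\lambda}\Vert_{1}\le 1$; then for $v\in H_{0}$, $v=\lim_{\lambda}\pi(e_{\lambda})v$, and
\[
\Vert U(g)v\Vert\le\liminf_{\lambda}\Vert U(g)\pi(e_{\lambda})v\Vert=\liminf_{\lambda}\Vert\pi(L_{g}e_{\lambda})v\Vert\le\Vert\pi\Vert\,\Vert v\Vert,
\]
giving $\sup_{g\in G}\Vert U(g)\Vert\le\Vert\pi\Vert<\infty$.

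Next I would apply Corollary \ref{bounded} to the bounded $J$-unitary group $\{U(g)\}$ on the sub-$\Pi$-space $H_{0}$, obtaining an invariant dual pair $(K_{+},K_{-})$ of $H_{0}$. Using $J$-symmetry once more, the $J$-orthogonal complement $H_{0}^{[\perp]}$ equals the null subspace $H_{\mathrm{null}}:=\{v:\pi(f)v=0\text{ for all }f\in L^{1}(G)\}$. After a finite-dimensional regularization I would arrange a $J$-orthogonal decomposition $H=H_{0}+H_{\mathrm{null}}$ with $H_{0}$ a sub-$\Pi_{k'}$-space and $H_{\mathrm{null}}$ a sub-$\Pi_{k-k'}$-space. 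Choosing any dual pair $(N_{+},N_{-})$ of $H_{\mathrm{null}}$ (automatically $\pi$-invariant, as $\pi$ vanishes there), the subspaces $K_{+}+N_{+}$ and $K_{-}+N_{-}$ form a dual pair of $H$ invariant under every $U(g)$, and hence under every $\pi(f)=\int f(g)U(g)\,dg$, since closed $U(G)$-invariant subspaces are preserved by such strong integrals.

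The principal obstacle will be the regularization step: the essential subspace $H_{0}$ may fail to be a non-degenerate sub-Krein space, so that $H_{0}$ and $H_{\mathrm{null}}$ need not intersect trivially nor span $H$ as a direct sum. The $\Pi_{k}$ hypothesis is what rescues the argument, since the isotropic part $H_{0}\cap H_{\mathrm{null}}$ is a finite-dimensional neutral subspace admitting a hyperbolic completion inside $H$; the delicate point will be to carry out this completion compatibly with the $G$-action, so that the invariant dual pair from Corollary \ref{bounded} lifts back to an invariant dual pair for $\pi$ on all of $H$.
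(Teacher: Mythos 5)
Your overall strategy coincides with the paper's: restrict to the essential subspace $H_{0}=\overline{D(L^{1}(G))H}$, integrate the representation to a uniformly bounded strongly continuous group of $J$-unitary operators (the paper does this by extending $D$ to the measure algebra $M(G)$ and setting $\pi(g)=\widetilde{D}(\delta_{g})$, which is the same operator as your $U(g)$), apply Corollary \ref{bounded} on $H_{0}$, and glue on an arbitrary dual pair of the complement, where the representation acts trivially. Your boundedness estimate via the approximate identity and the verification that $D(f)=\int f(g)U(g)\,dg$ preserves the invariant pair are both sound and match the paper.

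However, there is a genuine gap at what you yourself call the principal obstacle, and you leave it unresolved: you propose a ``hyperbolic completion'' of the isotropic part $H_{0}\cap H_{0}^{\bot}$ carried out ``compatibly with the $G$-action,'' but you give no construction, and in fact none is needed. The missing idea is that the essential subspace of a $J$-symmetric representation of a $^{\ast}$-algebra with a \emph{bounded approximate identity} is automatically non-degenerate (this is Lemma \ref{decomp} of the paper). The argument is short: you already observe that $H_{0}^{\bot}\subset\ker D(L^{1}(G))$, so if $y\in H_{0}\cap H_{0}^{\bot}$ then $D(u_{n})y=0$ for every term of the approximate identity; on the other hand $y\in H_{0}$ can be approximated within $\varepsilon$ by some $z\in D(L^{1}(G))H$, for which $D(u_{n})z\rightarrow z$, whence
\[
\Vert z\Vert=\lim_{n}\Vert D(u_{n})(z-y)\Vert\leq C\Vert D\Vert\,\varepsilon,\qquad C=\sup_{n}\Vert u_{n}\Vert,
\]
and therefore $\Vert y\Vert\leq(1+C\Vert D\Vert)\varepsilon$ for every $\varepsilon>0$, i.e.\ $y=0$. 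Thus $H=H_{0}+H_{0}^{\bot}$ is already a $J$-orthogonal direct sum of non-degenerate subspaces, each a $\Pi_{k'}$- and $\Pi_{k-k'}$-space respectively, and your final gluing step goes through verbatim with no equivariant lifting required. Without this observation your proof is incomplete, since the passage from the dual pair $(K_{+},K_{-})$ in $H_{0}$ to a dual pair in all of $H$ is exactly where the argument would otherwise break down.
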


We begin the proof of this theorem with a general statement which is
undoubtedly known but it is difficult to give a precise reference.

Recall that the {\it essential subspace} for a representation $D$ of an algebra $A$ on a Banach space $X$ is the closure of the linear span $D(A)X$ of all vectors $D(a)x$, where $a\in A$, $x\in X$. If the  essential subspace for $D$ coincides with $X$ then $D$ is
called \textit{essential}.

\begin{lemma}
\label{ext} Let $L$ be an ideal of a Banach algebra $A$, and $D:L\rightarrow
\mathcal{B}(X)$ be a bounded essential representation of $L$ in a Banach space
$X$. If $L$ has a bounded approximate identity $\{u_{n}\},$ then $D$ extends
to a bounded representation $\widetilde{D}$ of $A$ in $X$, and $\left\Vert
\widetilde{D}\right\Vert \leq C\left\Vert D\right\Vert $ where $C=\sup
_{n}\Vert u_{n}\Vert$.
\end{lemma}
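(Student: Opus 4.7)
The plan is to define the extension via strong-operator limits: set $\widetilde{D}(a)x := \lim_{n}D(au_{n})x$ for $a\in A$ and $x\in X$. This makes sense because $L$ is an ideal, so $au_{n}\in L$ and $D(au_{n})$ is already given. The estimate $\|D(au_{n})\|\leq\|D\|\|au_{n}\|\leq C\|D\|\|a\|$ is uniform in $n$, which will deliver the norm bound $\|\widetilde{D}\|\leq C\|D\|$ as soon as existence of the limit is established.

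The preparatory step is to show that $D(u_{n})x\to x$ for every $x\in X$. For $b\in L$ and $y\in X$ one has $D(u_{n})D(b)y=D(u_{n}b)y\to D(b)y$ since $u_{n}b\to b$ in norm. The operators $D(u_{n})$ are uniformly bounded by $C\|D\|$, and by essentiality the linear span of $D(L)X$ is dense in $X$, so a standard three-epsilon argument extends pointwise convergence from the dense subspace to all of $X$. Exactly the same idea handles $\widetilde{D}(a)$: for $x=\sum_{i}D(b_{i})y_{i}$ in the dense span,
\[
D(au_{n})x=\sum_{i}D(au_{n}b_{i})y_{i}\longrightarrow\sum_{i}D(ab_{i})y_{i},
\]
because $u_{n}b_{i}\to b_{i}$ and hence $au_{n}b_{i}\to ab_{i}$ in $L$. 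Combined with the uniform bound $\|D(au_{n})\|\leq C\|D\|\|a\|$, this defines, by continuity, a bounded operator $\widetilde{D}(a)$ on $X$ with $\|\widetilde{D}(a)\|\leq C\|D\|\|a\|$.

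It remains to check that $\widetilde{D}$ is a representation that agrees with $D$ on $L$. Linearity in $a$ is immediate from the definition. If $a\in L$, then $D(au_{n})x=D(a)D(u_{n})x\to D(a)x$ by the preparatory step, so $\widetilde{D}|_{L}=D$. For multiplicativity I would verify the identity on the dense subspace first: for $x=D(b)y$ with $b\in L$ the construction gives $\widetilde{D}(a_{2})x=D(a_{2}b)y$, and applying $\widetilde{D}(a_{1})$ to this vector (which now lies in $D(L)X$) yields
\[
\widetilde{D}(a_{1})\widetilde{D}(a_{2})x=\lim_{n}D(a_{1}u_{n})D(a_{2}b)y=\lim_{n}D(a_{1}u_{n}a_{2}b)y=D(a_{1}a_{2}b)y=\widetilde{D}(a_{1}a_{2})x,
\]
using $u_{n}a_{2}b\to a_{2}b$. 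Continuity and boundedness of both sides in $x$ extend the identity from the dense span of $D(L)X$ to all of $X$. The one technical obstacle is the interchange-of-limits issue inherent in verifying $\widetilde{D}(a_{1}a_{2})=\widetilde{D}(a_{1})\widetilde{D}(a_{2})$ directly from the definition; reducing to the dense subspace $D(L)X$, where each multiplication happens inside the ideal $L$ and only routine approximate-identity manipulations are needed, is what makes the argument clean.
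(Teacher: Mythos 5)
Your proof is correct and follows essentially the same route as the paper: both arguments rest on the identity $D(au_nb_i)x_i = D(au_n)D(b_i)x_i$ together with the uniform bound $\Vert D(au_n)\Vert\leq C\Vert D\Vert\Vert a\Vert$ and the density of $\mathrm{span}\,D(L)X$. The only (cosmetic) difference is that you define $\widetilde{D}(a)$ as a strong-operator limit of the operators $D(au_n)$, which makes well-definedness automatic at the cost of a Cauchy/completeness argument, whereas the paper defines $T_a$ algebraically on $\mathrm{span}\,D(L)X$ and uses the key norm inequality to get well-definedness and boundedness before taking the continuous extension.
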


\begin{proof}
Let us show that
\[
\left\Vert \sum_{i=1}^{n}D(ab_{i})x_{i}\right\Vert \leq C\Vert D\Vert\Vert
a\Vert\left\Vert \sum_{i=1}^{n}D(b_{i})x_{i}\right\Vert ,
\]
for any $a\in A,b_{i}\in L,x_{i}\in X$. Indeed,
\begin{align*}
\left\Vert \sum_{i=1}^{n}D(au_{n}b_{i})x_{i}\right\Vert  &  =\left\Vert
\sum_{i=1}^{n}D((au_{n})b_{i})x_{i}\right\Vert \\
&  =\left\Vert D(au_{n})\left(  \sum_{i=1}^{n}D(b_{i})x_{i}\right)
\right\Vert \\
&  \leq\left\Vert D(au_{n})\right\Vert \left\Vert \sum_{i=1}^{n}D(b_{i}%
)x_{i}\right\Vert \\
&  \leq C\left\Vert D\right\Vert \left\Vert a\right\Vert \left\Vert \sum
_{i=1}^{n}D(b_{i})x_{i}\right\Vert ,
\end{align*}
and it remains to note that
\[
\left\Vert \sum_{i=1}^{n}D(ab_{i})x_{i}-\sum_{i=1}^{n}D(au_{n}b_{i}%
)x_{i}\right\Vert \rightarrow0\text{ when }n\rightarrow\infty.
\]

Now we may define a map $T_{a}$ on the space $D(L)X$ by setting
\[
T_{a}\left(  \sum_{i=1}^{n}D(b_{i})x_{i}\right)  =\sum_{i=1}^{n}D(ab_{i}%
)x_{i}\text{ for all }b_{i}\in L\text{ and }x_{i}\in X.
\]
By the above,  $T_{a}$ is a well defined linear operator on
$D(L)X$ and
\[
\Vert T_{a}\Vert\leq C\Vert D\Vert\Vert a\Vert.
\]
Denoting by $\widetilde{D}(a)$ the closure of $T_{a}$, we obtain an operator
on $X$ with
\[
\Vert\widetilde{D}(a)\Vert\leq\Vert C\Vert D\Vert\Vert a\Vert.
\]
It is easy to see that the map $\widetilde{D}:a\mapsto\widetilde{D}(a)$ is a
representation of $A$ on $X$, extending $D$.
\end{proof}

Now we need a result about $J$-symmetric representations of *-algebras. Recall that a closed subspace $L$ of an indefinite metric space $H$ is {\it non-degenerate} if $L\cap L^{\bot} = 0$.

\begin{lemma}
\label{decomp} Let a $^{\ast}$-algebra $L$ have a bounded approximate identity
$\{u_{n}\}$, and let $D$ be a $J$-symmetric representation of $L$ on a Krein
space $H$. Then the essential subspace $H_0 = \overline{D(L)H}$ of  $D$ is
non-degenerate, and $H_{0}^{\bot}\subset\ker{D(L)}$.
\end{lemma}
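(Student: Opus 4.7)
The plan is to prove the two assertions in sequence: first establish the inclusion $H_0^{\bot}\subset\ker D(L)$ by a direct $J$-adjoint calculation, and then deduce non-degeneracy of $H_0$ by combining that inclusion with the standard bounded-approximate-identity trick that shows $\{D(u_n)\}$ acts as an approximate identity on the essential subspace.

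For the first inclusion, I would take $y\in H_0^{\bot}$, so $[y, D(a)x]=0$ for all $a\in L$ and $x\in H$. The $J$-symmetry of $D$ means $D(a)^{\sharp}=D(a^{\ast})$, so $[D(a^{\ast})y, x]=0$ for all $a\in L$, $x\in H$. Since the form $[\cdot,\cdot]$ on the Krein space $H$ is non-degenerate (it equals $(J\cdot,\cdot)$ for the invertible operator $J$), this forces $D(a^{\ast})y=0$ for every $a\in L$. As $L$ is a $^{\ast}$-algebra, $a^{\ast}$ ranges over all of $L$, so $y\in\ker D(L)$, giving $H_0^{\bot}\subset\ker D(L)$.

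For non-degeneracy, the key step is to show that $D(u_n)z\to z$ for every $z\in H_0$. Since $D$ is bounded and $\{u_n\}$ is bounded in $L$, the family $\{D(u_n)\}$ is uniformly bounded in $\mathcal{B}(H)$. For vectors of the special form $z=D(b)x$ with $b\in L$, $x\in H$, one has $D(u_n)D(b)x = D(u_n b)x \to D(b)x$ in norm because $u_n b\to b$ in $L$ and $D$ is continuous. By linearity this extends to the algebraic span $D(L)H$, and then a routine $\varepsilon/3$ argument using uniform boundedness of $\{D(u_n)\}$ extends it to the norm closure $H_0=\overline{D(L)H}$.

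Now suppose $z\in H_0\cap H_0^{\bot}$. The first part gives $z\in\ker D(L)$, so in particular $D(u_n)z=0$ for every $n$. Passing to the limit via the convergence just established yields $z=0$, proving $H_0$ is non-degenerate. I do not expect serious obstacles here; the only subtle point worth flagging is that the symbol $\bot$ refers to $J$-orthogonality throughout (as in the definition of non-degeneracy preceding the lemma), and that non-degeneracy of the Krein form on the whole space $H$ — which is what allows the conversion of $[D(a^{\ast})y,x]=0$ into $D(a^{\ast})y=0$ — must be used explicitly.
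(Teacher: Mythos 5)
Your proof is correct and follows essentially the same route as the paper's: the inclusion $H_{0}^{\bot}\subset\ker D(L)$ via the $J$-adjoint identity plus non-degeneracy of the Krein form on all of $H$, and then non-degeneracy of $H_{0}$ by approximating an element of $H_{0}\cap H_{0}^{\bot}$ with elements of $D(L)H$ and using that $D(u_{n})$ acts as a uniformly bounded approximate identity there. The only cosmetic difference is that you first establish $D(u_{n})z\to z$ for all $z\in H_{0}$ and then apply it, whereas the paper runs the same $\varepsilon$-approximation inline; the content is identical.
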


\begin{proof}
Let $K=H_{0}\cap H_{0}^{\bot}$. For any $x\in H$, $y\in H_{0}^{\bot}$ and
$a\in L$ we have $[x,D(a)y]=[D(a^{\ast})x,y]=0$ whence $D(a)y=0$. We proved
that $H_{0}^{\bot}\subset\ker D(L)$.

On the other hand, since $K\subset H_{0}$, then for each $y\in K$ and each
$\varepsilon>0$ there is $z\in D(A)H$ with $\Vert z-y\Vert<\varepsilon$. Note
that $\Vert D(u_{n})z-z\Vert\rightarrow0$ when $n\rightarrow\infty$, because
$D(u_{n})D(a)x=D(u_{n}a)x\rightarrow D(a)x$. Since $D(u_{n})y=0$, we get that
\[
\Vert z\Vert=\lim\Vert D(u_{n})(z-y)\Vert\leq C\Vert D\Vert\varepsilon,
\]
where $C=\sup_{n}\Vert u_{n}\Vert$. Therefore
\[
\Vert y\Vert\leq\Vert z\Vert+\Vert y-z\Vert\leq\varepsilon(1+C\Vert D\Vert).
\]
Since $\varepsilon$ can be arbitrary we conclude that $y=0$. Thus $K=0$ and
$H_{0}$ is non-degenerate.
\end{proof}

\begin{proof}
[The proof of Theorem \ref{grA}]Let now $H$ be a $\Pi_{k}$-space and
$D:L^{1}(G)\rightarrow B(H)$ be a continuous $J$-symmetric representation. It
is known that $L^{1}(G)$ has a bounded approximate identity $\{u_{n}\}$
(moreover $\Vert u_{n}\Vert=1$, for all $n$) so, by Lemma \ref{decomp}, $H$
decomposes in $J$-orthogonal sum of subspaces $H=H_{0}+H_{0}^{\bot}$, where
$H_{0}$ is the essential subspace for $D$.

The algebra $L^{1}(G)$ is an ideal of the *-algebra $M(G)$ of all finite
measures on $G$; we will denote the involution in $M(G)$ by $\mu\mapsto \mu^{\flat}$ and the product by $\mu\ast \nu$. Applying Lemma \ref{ext} to the restriction of $D$ to $H_{0}%
$, we have that there is a representation $\widetilde{D}$ of $M(G)$ on $H_{0}$
extending $D$. To check that $\widetilde{D}$ is $J$-symmetric, it suffices to
check the equality $[\widetilde{D}(\mu)x,y]=[x,\widetilde{D}(\mu^{\flat})y]$,
for $x$ of the form $D(f)z$, where $f\in L^{1}(G)$, $z\in H_{0}$. In this case
we have
\begin{align*}
\lbrack\widetilde{D}(\mu)x,y]  &  =[\widetilde{D}(\mu)D(f)z,y]=[D(\mu\ast 
f)z,y]=[z,D(f^{\flat}\ast\mu^{\flat})y]\\
&  =[z,D(f^{\flat})\widetilde{D}(\mu^{\flat})y]=[D(f)z,\widetilde{D}(\mu^{\flat
})y]\\
&  =[x,\widetilde{D}(\mu^{\flat})y].
\end{align*}
For each $g\in G$, we denote by $\delta_{g}$ the point measure in $g$. Setting
$\pi(g)=\widetilde{D}(\delta_{g})$, we obtain a $J$-unitary representation of
$G$. Indeed, since $(\delta_{g})^{-1}=\delta_{g^{-1}}$, we have
\[
\pi(gh)=\widetilde{D}(\delta_{gh})=\widetilde{D}(\delta_{g}\ast\delta
_{h})=\widetilde{D}(\delta_{g})\widetilde{D}(\delta_{h})=\pi(g)\pi(h),
\]
and
\[
\pi(g)^{\sharp}=(\widetilde{D}(\delta_{g}))^{\sharp}=\widetilde{D}%
(\delta_{g^{-1}})=\pi(g)^{-1}.
\]
Since $\Vert\delta_{g}\Vert=1$,
\[
\Vert\pi(g)\Vert\leq\left\Vert \widetilde{D}\right\Vert ,
\]
so $\pi$ is bounded.

Let us check that the representation $\pi$ is strongly continuous. Since $\pi$ is
bounded, it suffices to verify that the function $g\mapsto\pi(g)x$ is
continuous for $x$ in a dense subset of $H_{0}$. So we may take $x=D(f)y$, for
some $f\in L^{1}(G)$, $y\in H_{0}$. Since the map $g\longmapsto(\delta
_{g}\ast f)(h)=f(g^{-1}h)$ from $G$ to $L^{1}(G)$ is continuous, we get that
\[
\pi(g)x=\pi(g)D(f)y=D(\delta_{g}\ast f)y
\]
continuously depends on $g$.

Applying Corollary \ref{bounded}, we find an invariant dual pair of subspaces
$K_{+},K_{-}$ of $H_{0}$ invariant for all operators $\pi(g)$. To see that
these subspaces are invariant for $D(L^{1}(G))$, let us denote by $W$ the
representation of $L^{1}(G)$ generated by $\pi$:
\[
W(f)=\int_{G}f(g)\pi(g)dg.
\]
Clearly $K_{+}$ and $K_{-}$ are invariant for all operators $W(f)$, and we
have only to show that $W(f)=D(f)$, for all $f\in L^{1}(G)$.

Since $\pi(g)(D(f)x)=D(\delta_{g}\ast f)x$, for all $x\in H$ and $f\in L^{1}(G)$, we
have
\begin{align*}
W(u)D(f)x  &  =\int_{G}u(g)\pi(g)D\left(  f\right)  xdg=\int_{G}u(g)D\left(
\delta_{g}\ast f\right)  xdg\\
&  =D\left(  \int_{G}u(g)(\delta_{g}\ast f)dg\right)  x=D(u\ast f)x\\
&  =D(u)D(f)x
\end{align*}
for each $u\in L^{1}(G)$. Since vectors of the form $D(f)x$ generate $H$, we
conclude that $W(u)=D(u)$.

As we know, the restrictions of all operators $D(f)$, $f\in L^{1}(G)$, to
$H_{0}^{\bot}$ are trivial. So we may choose any dual pair $N_{+},N_{-}$ of
$H_{0}^{\bot}$ and, setting $H_{+}=K_{+}+N_{+}$, $H_{-}=K_{-}+N_{-},$ we will
obtain a dual pair in $H$ invariant for $D(L^{1}(G))$.
\end{proof}

{\bf Acknowledgement}.  The authors are grateful to the Referee for many helpful suggestions and remarks.

\end{document}